\documentclass{article}
\usepackage[left=0.76in,right=0.75in,top=0.76in,bottom=0.78in]{geometry}

\usepackage{cite}
\usepackage{amsmath,amssymb,amsfonts}
\usepackage{algorithmic}
\usepackage{graphicx}
\usepackage{textcomp}
\usepackage{xcolor}
\usepackage{booktabs}
 \usepackage{psfrag}
\usepackage{amsthm}
 \usepackage{caption}
\usepackage{subcaption}
\usepackage{balance}
\usepackage{multirow}

\usepackage[linesnumbered,ruled,vlined,algo2e]{algorithm2e}

\SetKwInput{KwInput}{Input}                
\SetKwInput{KwOutput}{Output}             

\usepackage{dsfont}
\usepackage{tikz}
\usetikzlibrary{shapes.geometric}
\usepackage{pgfplots}
\pgfplotsset{width=0.45\textwidth}

\newtheorem{thm}{Theorem}[section]

\newtheorem{defn}{Definition}[section]

\newtheorem{rem}{Remark}
\newtheorem{assum}{Assumption}[section]

\usepackage{graphicx}

%===============================================================================
%%%Definition of symbols etc.

\newcommand{\T}{^\mathsf{T}} % matrix transpose
\renewcommand{\d}{\mathrm{d}} % derivative operator
 % real part of a complex number
 % imaginary part of a complex number
\newcommand{\R}{\mathds{R}} % real numbers
 % complex numbers
 % rational numbers
 % natural numbers
 % positive natural numbers
 % positive real numbers

\DeclareMathOperator*{\argmax}{arg\,max}

%===============================================================================

\bibliographystyle{abbrv}

\def\BibTeX{{\rm B\kern-.05em{\sc i\kern-.025em b}\kern-.08em
    T\kern-.1667em\lower.7ex\hbox{E}\kern-.125emX}}
\begin{document}

\title{\LARGE \bf Automatic scenario generation for efficient solution of robust optimal control problems\\
\thanks{This work has received funding from the EPSRC (Engineering and Physical Sciences) under the Active Building Centre
 project (reference number: EP/V012053/1). M. Zagorowska also acknowledges funding from the European Research Council (ERC) under the H2020 Advanced Grant no. 787845 (OCAL).}
}

\author{Marta Zagorowska, Paola Falugi, Edward O'Dwyer, and Eric C. Kerrigan% <-this % stops a space
\thanks{M. A. Zagorowska was with the Department of Electrical and Electronic Engineering, Imperial College London, currently with Automatic Control Laboratory, ETH Zurich 
        {\tt\small m.zagorowska@imperial.ac.uk, mzagorowska@ethz.ch}}%
\thanks{P. Falugi was with the Department of Electrical and Electronic Engineering, Imperial College London, currently with the University of East London
        {\tt\small p.falugi@imperial.ac.uk,
        p.falugi@uel.ac.uk}}%
\thanks{E. O'Dwyer is with the Department of Chemical Engineering, Imperial College London 
        {\tt\small e.odwyer@imperial.ac.uk}}%
\thanks{E. C. Kerrigan is with the Department of Electrical and Electronic Engineering and with Department of Aeronautics, Imperial College London
        {\tt\small e.kerrigan@imperial.ac.uk}}%
}

\maketitle

\begin{abstract}
Existing methods for nonlinear robust control often use scenario-based approaches to formulate the control problem as large nonlinear optimization problems. The optimization problems are challenging to solve due to their size, especially if the control problems include time-varying uncertainty. This paper draws from local reduction methods used in semi-infinite optimization to solve robust optimal control problems with parametric and time-varying uncertainty. By iteratively adding interim  worst-case scenarios to the problem, methods based on local reduction provide a way to manage the total number of scenarios. We show that the local reduction method for optimal control problems consists of solving a series of simplified optimal control problems to find worst-case constraint violations. In particular, we present examples where local reduction methods find worst-case scenarios that are not on the boundary of the uncertainty set. We also provide bounds on the error if local solvers are used. The proposed approach is illustrated with two case studies with parametric and additive time-varying uncertainty. In the first case study, the number of scenarios obtained from local reduction is 101, smaller than in the case when all $2^{14+3\times192}$ extreme scenarios are considered. In the second case study, the number of scenarios obtained from the local reduction is two compared to 512 extreme scenarios. Our approach was able to satisfy the constraints both for parametric uncertainty and time-varying disturbances, whereas approaches from literature either violated the constraints or became computationally expensive. 
\end{abstract}

\section{Introduction}

Robust optimal control problems are often solved using a scenario-based approach, where each \emph{scenario} corresponds to a separate realization of uncertainty. Increasing the number of scenarios improves robustness, while increasing the size of the optimization problems. Mitigating the size of the problem by reducing the number of scenarios requires knowledge about how the uncertainty affects the system which is a challenge especially if uncertainty varies with time. This paper draws from approaches used in semi-infinite optimisation to solve robust optimal control problems by selecting scenarios in an efficient way.

\subsection{Background}
To ensure that the optimisation problems resulting from scenario-based approaches to robust control are tractable, the number of scenarios must be limited \cite{scenario_Calafiore2006}. Usually, the choice of scenarios is done from experience \cite{scenario_Grammatico2015,Fundamentals_Haaberg2019} and requires knowledge about both the controlled system and the uncertainty to ensure that the chosen scenarios guarantee robustness. A recent review of scenario-based methods
indicated that scenario selection is highly affected by the knowledge about the uncertainty distribution\cite{scenario_Campi2021}. In practice, to tackle problems with limited knowledge about the uncertainty, it is often assumed that the worst-case scenarios lie on the boundary of the uncertainty set \cite{Cutting_Mutapcic2009,Handling_Lucia2014,Monotonicity_Vuffray2015,Sensitivity_Thombre2021}. The worst-case scenario in nonlinear systems may lie in the interior of the uncertainty range\cite{Interval_Krasnochtanova2010,Robust_Puschke2018}. In this paper we show that the worst-case scenarios may be inside the uncertainty set, even for a linear dynamic system and present a method for choosing potential worst-case scenarios assuming limited knowledge about uncertainty. 

Systematic approaches to choosing scenarios for time-varying uncertainty are usually based on creating \emph{scenario trees} \cite{Scenario_Heitsch2009,Sensitivity_Thombre2021}. In these approaches, a large set of scenarios is chosen at the beginning of the time horizon. The number of elements in the set of scenarios increases combinatorially~\cite{Min_Scokaert1998}. However, there is no guarantee that the chosen set of scenarios includes the actual worst-case scenarios\cite{Robust_Puschke2018}. To overcome this drawback, we propose to use a method derived from semi-infinite optimisation to iteratively add new scenarios to the current set, to provide more flexibility in adjusting the set of scenarios and finding worst-case scenarios. 

Previous studies provided an in-depth review of semi-infinite optimization methods \cite{Semi_Hettich1993,Semi_Hettich2009,review_Hettich1983,adaptive_Seidel2020,Recent_Djelassi2021}. In particular, it has been indicated\cite{Semi_Hettich1993} that \emph{local reduction} methods allow overcoming the dependence on the initial choice of scenarios. In these methods, the set of scenarios is created iteratively by alternating between solving an optimisation problem with the current set of scenarios and solving interim optimisation problems to find the maximal violation of constraints and extend the set of scenarios \cite{Infinitely_Blankenship1976}. Therefore local reduction methods enable adding scenarios that may not have been considered at the beginning, such as scenarios from the interior of the uncertainty set. 

The iterative approach based on alternating between optimisation problems has been already used for robust control, which suggests potential usefulness of local reduction. The D-K iteration found in $\mu$-synthesis consists in alternating between the synthesis of an $\mathcal{H}_{\infty}$ controller and minimisation of the singular value for the corresponding controller \cite{Multivariable_Skogestad2007}. However, the D-K iteration method depends on the optimality of each step, which indicates that the local reduction method will also be affected by how the interim optimisation problems are solved. In particular, due to the need to solve to global optimality the interim problems\cite{Recent_Djelassi2021}, the complexity of local reduction methods prevented their use in robust optimal control problems. An approach based on simulated annealing has been proposed to facilitate finding a global solution\cite{reduction_Pereira2009}. We show that the local reduction method provides good results even if local solvers are used and provide bounds on the solution. 

Existing applications of semi-infinite optimisation methods in control systems with uncertainty are limited. Semi-infinite optimization methods have been used for optimal control\cite{Semi_Hauser2018} to find optimal trajectories for robotic arms. However, the authors considered only exogenous uncertainty due to obstacles that did not affect the dynamics of the controlled systems. Parametric uncertainty for linear systems was considered\cite{Robust_Katselis2012,parallel_Zakovic2003} to apply semi-infinite optimisation methods for model identification. However, these works did not consider time-varying uncertainty. Similarly, semi-infinite optimisation methods has been used to solve an optimal control problem with only parametric uncertainty\cite{Robust_Puschke2018}. Time-varying uncertainty was considered in works where local reduction was applied to find the interim worst-case scenarios\cite{Sensitivity_Thombre2021}. However, the authors assumed that at every time step the number of possible scenarios was finite.

\subsection{Contributions}

The main contributions of the current work are:
\begin{itemize}
    \item Formulation of a local reduction method as a way of automatically generating scenarios in robust nonlinear control problems if time-varying and constant uncertainties are present, 
    \item Formulation of error bounds on constraint violation if local optimization solvers are used in local reduction,
    \item Numerical demonstration of local reduction methods for solving nonlinear robust optimal control problems with parametric uncertainty and linear robust optimal control problems with both parametric and time-varying uncertainty.
\end{itemize}
A preliminary version of the application of local reduction for scenario generation is available\cite{Efficient_Zagorowska2022a}.

The rest of the paper is structured  as follows. Section~\ref{sec:ProblemFormulation} introduces robust optimal control problems. Section~\ref{sec:LocalReduction} presents the new method for solving robust optimal control problems. The numerical results are shown in Section~\ref{sec:NumericalResults} where we compared the results obtained with the scenarios from local reduction to three commonly used approaches: nominal case with no uncertainty, random case with randomly drawn realisations of uncertainty, boundary case with only extreme values. The paper ends with conclusions in Section~\ref{sec:Conclusions}.

\section{Problem formulation}
\label{sec:ProblemFormulation}

\subsection{Semi-infinite optimization problem}
\label{sec:semiinfProblem}
A semi-infinite optimization problem is formulated as:
\begin{subequations} \label{eqn:SemiInf}
\begin{align}
\mathcal{Q}:\quad &\min_{\theta\in\mathcal{A}} \quad Q(\theta)
    \label{eqn:SemiinfCost}\\
&\text{subject to } R(\theta,\rho)\leq 0 \label{eq:Semiinfmapping} \text{ for all }\rho\in\mathcal{B}
\end{align}
\end{subequations}
where $\mathcal{A}\subset \R^{n_{\theta}}$ and $\mathcal{B}\subset\R^{n_{\rho}}$ are nonempty and compact sets, and $Q$ and $R$ are continuous functions of their respective arguments \cite{Infinitely_Blankenship1976}. The problem \eqref{eqn:SemiInf} has a finite number of variables $\theta$ but includes an infinite number of constraints if $\mathcal{B}$ has an infinite number of points. In particular, $\mathcal{B}$ may be uncountable. 

One approach to remove the infinite number of constraints consists in rewriting the constraint \eqref{eq:Semiinfmapping} as:
\begin{equation}
    S(\theta):=\max_{\rho\in\mathcal{B}} R(\theta,\rho)\leq 0. \label{eq:SemiinfmappingMax}
\end{equation}
The challenge in solving the equivalent problem with constraint \eqref{eq:SemiinfmappingMax} is in non-differentiability of the function $S(\cdot)$. The local reduction method proposed by \cite{Infinitely_Blankenship1976} allows overcoming the non-differentiability of $S(\cdot)$ by sequentially solving \eqref{eqn:SemiInf} with finite subsets of constraints taken from $\mathcal{B}$. 

The main challenge in formulating robust optimal control problems as semi-infinite optimisation lies in inclusion of system dynamics in the form of equality constraints. In this paper, we show that optimal control problems can be formulated as semi-infinite optimization problems and solved using the local reduction method\cite{Infinitely_Blankenship1976}.  

\subsection{Dynamic system with uncertainty}
The system to be controlled is described by a nonlinear difference equation with time-varying uncertainty $w_k\in\mathbb{W}\subset \R^{n_w}$ and constant uncertainty $d\in\mathbb{D}\subset\R^{n_d}$:
\begin{equation}
x_{k+1}=f_k(x_k,u_k,w_k,d)
\label{eq:InitialDynamics}
\end{equation}
where $f_k$ is continuously differentiable.
The state $x_0$ at time zero is w.l.o.g.\ assumed to be equal to a given~$\hat{x}$. 

The control trajectory $\mathbf{u}:=(u_0,\ldots,u_{N-1})$ is generated by a causal dynamic feedback policy 
\[
u_k:=\pi_k(x_0,\ldots,x_k;q_0,\ldots,q_k,r)
\] 
that is parameterised by $\mathbf{q}:=(q_0,q_1,\ldots,q_{N-1})\in\mathbb{R}^{n_q}$ and $r\in\R^{n_r}$. The state trajectory $\mathbf{x}:=(x_0,\ldots,x_N)$. The time-varying uncertainty $w_k$ at time~$k$ and the constant uncertainty $d$ affect the dynamics in both an additive and non-additive way, and take on values from compact and uncountable (infinite cardinality) sets. Uncertainty in the measured value of $x_k$ can be modelled by a suitably-defined choice of $f_k$, $\pi_k$ and $w_k$. In this work, we assume that the structure of the dynamic feedback policy, and hence the parameterisation, is already defined.

A trajectory $(\mathbf{x},\mathbf{u})$ satisfying the dynamics \eqref{eq:InitialDynamics} and control policy for a given  parameterization $(\mathbf{q},r)$ and realisation of uncertainty $(\mathbf{w}$, $d$), where the trajectory $\mathbf{w}:=(w_0,\ldots,w_{N-1}) \in \mathbb{W}^N:=\mathbb{W}\times\cdots\times\mathbb{W}$,  is defined as:
\begin{equation}
\label{eq:Trajectory}
 \mathbf{z}(\mathbf{q},r,\mathbf{w},d):=  \Big\lbrace (\mathbf{x},\mathbf{u}) \mid x_0=\hat{x},
x_{k+1}=f_k(x_k,u_k,w_k,d),
u_k=\pi_k(x_0,\ldots,x_k;q_0,\ldots,q_k,r),
k=0,1,\ldots,N-1 \Big\rbrace. 
\end{equation}

\subsection{Robust optimal control problem}
\label{sec:RobustOptimalControl}

\subsubsection{Objective function and constraints}
\label{sec:ObjCstr}

The cost function for the optimal control problem over a horizon of length $N$ is:
\begin{equation}
J_N(\mathbf{x},\mathbf{u},\mathbf{w}, d):=J_f(x_N,w_N,d)+\sum\limits_{k=0}^{N-1}\ell_k(x_k,u_k,w_k,d).
\label{eq:Objective}
\end{equation}
Both the terminal cost function $J_f(\cdot,\cdot,\cdot)$ and stage cost  $\ell_k(\cdot,\cdot,\cdot,\cdot)$ are continuously differentiable and depend on the uncertainty $\textbf{w}$ and $d$. The objective of the optimal control problem is to find a feedback policy $\pi$ for system~\eqref{eq:InitialDynamics} such that the worst-case cost in \eqref{eq:Objective} is minimized and the constraints
\begin{equation}
g_k(x_k,u_k,w_k,d)\leq 0
\label{eq:InitCstr}
\end{equation}
are satisfied for all time instants $k=0,\ldots,N-1$, all states~$\mathbf{x}$, control~$\mathbf{u}$, uncertainty~$\mathbf{w}$~and~$d$. The vector function of $n_{g}$ components, $g_k(\cdot,\cdot,\cdot,\cdot)$, is continuously differentiable and depends on uncertainty $\mathbf{w}$ and $d$. Note that a constraint on~ $x_N$ can be included by incorporating $f_{N-1}$ in a suitable definition of $g_{N-1}$.

To ensure that the optimal control problem with the objective \eqref{eq:Objective} and the constraints \eqref{eq:InitCstr} is well-defined over the horizon $N$, we introduce the following assumption on the trajectories \eqref{eq:Trajectory}:
\begin{assum}
\label{ass:Bounded}
The trajectories \eqref{eq:Trajectory} are bounded over a finite horizon $N$:
\begin{equation}
\forall N\in\R, \mathbf{q}\in\R^{n_q},r\in\R^{n_r}, \mathbf{w}\in\mathbb{W}^N,d\in\mathbb{D}\; \exists \varsigma_z\in\R:\; \|\mathbf{z}(\mathbf{q},r,\mathbf{w},d)\|\leq \varsigma_z
\end{equation}
\end{assum}
\noindent  We note that the boundedness of \eqref{eq:Trajectory} need not imply stability of the dynamics \eqref{eq:InitialDynamics}.

\subsubsection{Semi-infinite formulation}
\label{sec:SemiInfForm}

Given a set of uncertainties 
$
\mathbb{H} \subseteq \mathbb{W}^N\times\mathbb{D},
$
the problem in this work is stated as:
\begin{subequations}
\label{eq:Plifted}
\begin{align}
 \mathcal{P}_N(\mathbb{H}): \min_{{\substack{\mathbf{q},r\\ \mathbf{x}^i,\mathbf{u}^i,\\i\in\mathbb{J}}}}& \max_{\substack{\mathbf{w}^i, d^i\\i\in\mathbb{J}}}\  J_N(\mathbf{x}^i,\mathbf{u}^i,\mathbf{w}^i, d^i) \label{eq:CostLifted}
   \\
 \text{s.t. } 
g_k(x_k^i,u_k^i,w_k^i,d^i)\leq 0,\ &\forall i\in\mathbb{J}, k=0,\ldots,N-1\\
(\mathbf{x}^i,\mathbf{u}^i) = \mathbf{z}(\mathbf{q},r,\mathbf{w}^i,d^i),\ &\forall i\in\mathbb{J} \label{eq:TrajectoryAll}
\end{align}
\end{subequations}
where $\mathbb{J} := \lbrace 1,\ldots,\operatorname{card} \mathbb{H}\rbrace$ and $(\mathbf{x}^i,\mathbf{u}^i)$ is the state and input trajectory associated with the $i^\text{th}$ disturbance realisation $(\mathbf{w}^i,d^i)$ such that $$\mathbb{H} = \bigcup_{i\in\mathbb{J}}\{(\mathbf{w}^i,d^i)\}.$$

If $\mathbf{z}(\cdot)$ in \eqref{eq:TrajectoryAll} is linear jointly in all arguments, the problem \eqref{eq:Plifted} can often be solved using scenario-based methods for robust control\cite{scenario_Calafiore2006,Min_Scokaert1998}, provided additional convexity assumptions are satisfied by the uncertainty set $\mathbb{W}$. In this work, the dynamics from \eqref{eq:InitialDynamics} are nonlinear and $\mathbb{W}$ is only non-empty and compact. Moreover, the set $\mathbb{H}$ is assumed uncountable, which is a common case if the disturbances belong to a polytope.

%%%%%%%%%%%%%%%%%%%%%%%%%%%%%%%%%%%%%%THEOREM VERSION%%%%%%%%%%%%%%%%%%%%%%%%%%%%%%%%%%%%%%%%%%%%%%%%%%%%%%%%%%%%%%%%%%%%%%%%%%%%%%%%%%%%%%%%%%%%%%%%%%%
\begin{thm}
\label{thm:MainResult}
The robust optimal control problem \eqref{eq:Plifted} is equivalent to the semi-infinite optimization problem \eqref{eqn:SemiInf} with $\theta:=(\mathbf{q},r,\gamma)$, where $\gamma$ is an additional scalar parameter characterizing the cost upper-bound, $\rho:=(\mathbf{w},d)$ and the sets $\mathcal{A}:=\R^{n_q}\times\R^{n_r}\times\R$, $\mathcal{B}:=\mathbb{H}$.
\end{thm}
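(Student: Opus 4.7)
The plan is to reduce \eqref{eq:Plifted} to the template \eqref{eqn:SemiInf} through two standard manoeuvres: an epigraph reformulation that eliminates the inner maximization over uncertainties, and substitution of the trajectory-generating map \eqref{eq:Trajectory} that eliminates $(\mathbf{x}^i,\mathbf{u}^i)$ as independent decision variables. The outcome is a problem whose only remaining decision variables are $\mathbf{q}, r$ and an auxiliary scalar $\gamma$, with a family of inequality constraints indexed by the (possibly uncountable) set $\mathbb{H}$.

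First, introduce the epigraph variable $\gamma\in\R$ and rewrite \eqref{eq:Plifted} equivalently as minimizing $\gamma$ over $(\mathbf{q},r,\gamma,\{\mathbf{x}^i,\mathbf{u}^i\}_{i\in\mathbb{J}})$ subject to $J_N(\mathbf{x}^i,\mathbf{u}^i,\mathbf{w}^i,d^i)\leq \gamma$ for every $i\in\mathbb{J}$, together with the original pathwise constraints $g_k(x_k^i,u_k^i,w_k^i,d^i)\leq 0$ and the trajectory equalities \eqref{eq:TrajectoryAll}. This is the textbook epigraph reformulation of a min--max over scenarios and is equivalent in the sense of having the same optimal $(\mathbf{q},r)$ and the same optimal value.

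Second, use the equality \eqref{eq:TrajectoryAll}, namely $(\mathbf{x}^i,\mathbf{u}^i)=\mathbf{z}(\mathbf{q},r,\mathbf{w}^i,d^i)$, to substitute the state and input trajectories out of both the cost upper bound and the inequality constraints. Since $\mathbf{z}(\cdot)$ is uniquely defined by the recursion in \eqref{eq:Trajectory}, this substitution is lossless. Define
\begin{equation*}
\theta:=(\mathbf{q},r,\gamma),\quad \rho:=(\mathbf{w},d),\quad Q(\theta):=\gamma,
\end{equation*}
and let $R(\theta,\rho)$ be the vector valued function stacking, for $k=0,\ldots,N-1$, the components $g_k(x_k,u_k,w_k,d)$ evaluated along $(\mathbf{x},\mathbf{u})=\mathbf{z}(\mathbf{q},r,\mathbf{w},d)$, together with the extra scalar $J_N(\mathbf{x},\mathbf{u},\mathbf{w},d)-\gamma$. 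With $\mathcal{A}:=\R^{n_q}\times\R^{n_r}\times\R$ and $\mathcal{B}:=\mathbb{H}$, the problem from the previous paragraph takes verbatim the form \eqref{eqn:SemiInf}, since every constraint now reads $R(\theta,\rho)\leq 0$ and is required to hold for every $\rho\in\mathcal{B}$ (the quantifier ``for all $i\in\mathbb{J}$'' with $\mathbb{J}$ indexing $\mathbb{H}$ being identical to ``for all $\rho\in\mathbb{H}$'').

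Finally, verify the regularity hypotheses implicitly attached to \eqref{eqn:SemiInf}. The set $\mathbb{H}\subseteq\mathbb{W}^N\times\mathbb{D}$ is nonempty and compact by assumption, so $\mathcal{B}$ is admissible; $Q$ is linear, hence continuous; and $R$ is continuous in $(\theta,\rho)$ because $g_k$, $\ell_k$ and $J_f$ are continuously differentiable and the trajectory map $\mathbf{z}$ inherits continuity in $(\mathbf{q},r,\mathbf{w},d)$ from the continuous differentiability of the $f_k$ through a finite composition over the horizon (Assumption \ref{ass:Bounded} guarantees that this composition stays in a bounded region so continuity is not lost at the boundary of the domain). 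The main obstacle I expect is the bookkeeping around the continuity of $\mathbf{z}$ and around the fact that $\mathcal{A}$ as stated is not compact while the template \eqref{eqn:SemiInf} asks for compactness; the first is handled by an explicit induction over $k=0,\ldots,N-1$, and the second is harmless because either Assumption \ref{ass:Bounded} together with coercivity of $J_N$ restricts minimizers to a compact subset of $\mathcal{A}$, or the compactness requirement on $\mathcal{A}$ is used only for existence arguments that are not needed here.
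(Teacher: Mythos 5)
Your proof follows essentially the same route as the paper's: an epigraph reformulation with the auxiliary scalar $\gamma$, followed by substitution of the trajectory map $\mathbf{z}$ to eliminate $(\mathbf{x}^i,\mathbf{u}^i)$ as decision variables, and identification of $\theta=(\mathbf{q},r,\gamma)$, $\rho=(\mathbf{w},d)$, $Q(\theta)=\gamma$. The only cosmetic difference is that you stack the pathwise constraints and the cost bound into a vector-valued $R$, whereas the paper collapses them into the single scalar $G:=\max\lbrace \max_{h,k} e_h^T g_k,\ J_N-\gamma\rbrace$ so as to match the scalar form \eqref{eq:SemiinfmappingMax}; your closing observation that $\mathcal{A}=\R^{n_q}\times\R^{n_r}\times\R$ is not compact as the template \eqref{eqn:SemiInf} requires is a fair one, which the paper also leaves unaddressed at this stage and only resolves later by restricting to a compact set $\mathbb{F}$ in the convergence theorem.
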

\begin{proof}
In contrast to \eqref{eqn:SemiinfCost}, the objective function in \eqref{eq:CostLifted} contains uncertainty. Introducing $\gamma\in\R$, we rewrite \eqref{eq:Plifted} as:
\begin{subequations}
\label{eq:Pgamma}
\begin{align}
 \mathcal{P}_N(\mathbb{H}):\quad \min_{{\substack{\gamma,\mathbf{q},r\\ \mathbf{x}^i,\mathbf{u}^i, i\in\mathbb{J}}}}& \quad \gamma 
   \\
 \text{s.t. } 
g_k(x_k^i,u_k^i,w_k^i,d^i)\leq 0,\ &\forall i\in\mathbb{J},k=0,\ldots,N-1 \label{eq:PaugIneq}\\
(\mathbf{x}^i,\mathbf{u}^i) = \mathbf{z}(\mathbf{q},r,\mathbf{w}^i,d^i),\ &\forall i\in\mathbb{J}\\
J_N(\mathbf{x}^i,\mathbf{u}^i,\mathbf{w}^i, d^i)\leq \gamma,\ &\forall i\in\mathbb{J}\label{eq:CstrObjRef}
\end{align}
\end{subequations}
The problem \eqref{eq:Pgamma} has uncertainty exclusively in the constraints. If $\operatorname{card} \mathbb{H}$ is finite, then the problem \eqref{eq:Pgamma} is convenient to solve numerically using tailored efficient finite-dimensional optimisation methods that exploit the sparsity in the relevant Jacobians and Hessians. However, infinite cardinality of $ \mathbb{H}$ yields an infinite number of both constraints and variables, which means that the problem \eqref{eq:Pgamma} needs to be further reformulated to become \eqref{eqn:SemiInf}. Noticing that the constraint \eqref{eq:PaugIneq} is equivalent to
\begin{equation}
    \max_k g_k(x_k^i,u_k^i,w_k^i,d^i)\leq 0,\ \forall i\in\mathbb{J},
\end{equation}
we introduce
\begin{equation}
G(\mathbf{x}^i,\mathbf{u}^i,\mathbf{w}^i,d^i,\gamma):=\max\lbrace \max_{h,k}\; e_h^T g_k(x^i_k,u^i_k,w^i_k,d^i), J_N(\mathbf{x}^i,\mathbf{u}^i,\mathbf{w}^i,d^i)-\gamma \rbrace
\label{eq:AllConstraints}    
\end{equation}
In \eqref{eq:AllConstraints}, $e_h$ is the $h^\text{th}$ column of an identity matrix $\mathbb{I}_{n_g} $. Using~\eqref{eq:Trajectory} and \eqref{eq:AllConstraints}, we can write \eqref{eq:Pgamma} as:
\begin{subequations}
\label{eq:Prevised}
\begin{align}
&\mathcal{P}_N(\mathbb{H}):\quad  \min_{\mathbf{q},r,\gamma} \quad \gamma \label{eq:RevisedCost}
   \\
 &\quad \text{s.t. } 
G(\mathbf{z}(\mathbf{q},r,\mathbf{w},d),\mathbf{w},d,\gamma)\leq 0,\ \forall (\mathbf{w},d)\in\mathbb{H},
\label{eq:PreConstraints}
\end{align}
\end{subequations}
The problem \eqref{eq:Prevised} is equivalent to
\begin{subequations}
\label{eq:PrevisedMax}
\begin{align}
&\mathcal{P}_N(\mathbb{H}):\quad  \min_{\mathbf{q},r,\gamma} \quad \gamma
   \\
 &\quad \text{s.t. } 
\max_{(\mathbf{w},d)\in\mathbb{H}}G(\mathbf{z}(\mathbf{q},r,\mathbf{w},d),\mathbf{w},d,\gamma)\leq 0.
\label{eq:PreConstraintsMax}
\end{align}
\end{subequations}
Taking $\theta:=(\mathbf{q},r,\gamma)$ and $\rho:=(\mathbf{w},d)$ in \eqref{eq:Prevised} (similarly in \eqref{eq:PrevisedMax}) we obtain the form of \eqref{eqn:SemiInf} (similarly \eqref{eq:SemiinfmappingMax}).
\end{proof}

Theorem~\ref{thm:MainResult} makes no assumptions on the cardinality of the set $\mathbb{H}$, which is uncountable in general. As a result, problem \eqref{eq:Prevised} has an infinite number of constraints, in general, in a similar way to \eqref{eqn:SemiInf}. Using the fact that \eqref{eq:Prevised} is equivalent to \eqref{eqn:SemiInf}, and \eqref{eq:Plifted} is equivalent to~\eqref{eq:Prevised}, we note that solving \eqref{eq:Prevised} using methods developed for semi-infinite optimization of the form \eqref{eqn:SemiInf} is equivalent to solving the optimal control problem~\eqref{eq:Plifted}. Thus, Theorem~\ref{thm:MainResult} allows one to solve the optimal control problem~\eqref{eq:Plifted} as a semi-infinite optimization problem of the form~\eqref{eqn:SemiInf} using local reduction\cite{Infinitely_Blankenship1976}. 

%%%%%%%%%%%%%%%%%%%%%%%%%%%%%%%%%%%%%%%%%%%%%%%%%%%%%%%%%%%%%%%%%%%%%%%%%%%%%%%%%%%%%%%%%%%%%%%%%%%%%%%%%%%%%%%%%%%%%%%%%%%%%%%%%%%%%%%%%%%%%%%%%%%%%%%

\subsubsection{Robust solution}
We now introduce definitions of robust solutions that we are going to use in the remainder of the paper. First, we notice that the left-hand side of~\eqref{eq:PreConstraintsMax} is equivalent to:
\begin{equation}
    G_{\max}(\mathbf{q},r,\gamma,\mathbb{H}):=\max_{
{\substack{(\mathbf{w},d)\in\mathbb{H}\\ (\mathbf{x},\mathbf{u})=\mathbf{z}(\mathbf{q},r,\mathbf{w},d)}}
} G(\mathbf{x},\mathbf{u},\mathbf{w},d,\gamma)
    \label{eq:Gmax}
\end{equation}
Using \eqref{eq:Gmax} allows us to introduce the necessary definitions.

\begin{defn}[Scenario]
\label{def:Scenario}
A scenario is a realisation of the uncertainty $(\mathbf{w},d)\in\mathbb{W}^N\times\mathbb{D}$.
\end{defn}

\begin{defn}[Worst-case scenario]
\label{def:WorstCase}
A worst-case scenario for a given $(\mathbf{q},r,\gamma)$ is a realisation of the uncertainty $(\mathbf{w}^*,d^*)\in\mathbb{W}^N\times\mathbb{D}$ that maximises constraint violation:
\begin{equation}
(\mathbf{x}^*,\mathbf{u}^*,\mathbf{w}^*,d^*) \in \argmax_{
{\substack{(\mathbf{w},d)\in\mathbb{W}^N\times\mathbb{D}\\ (\mathbf{x},\mathbf{u})=\mathbf{z}(\mathbf{q},r,\mathbf{w},d)}}
} G(\mathbf{x},\mathbf{u},\mathbf{w},d,\gamma)
\end{equation}
\end{defn}

\begin{defn}[Robust solution]
A triple $(\mathbf{q}^*, r^*,\gamma^*)$ is called a \emph{robust solution} if
$G_{\max}(\mathbf{q}^*,r^*,\gamma^*,\mathbf{w},d)\leq 0$ for all $(\mathbf{w},d)\in \mathbb{W}^N\times\mathbb{D}$.

\end{defn}

\begin{defn}[Solution robust to $s$ scenarios]
A triple $(\mathbf{q}^*, r^*,\gamma^*)$ is called a \emph{robust solution to the $s$ scenarios in $\mathbb{H}$} if $G_{\max}(\mathbf{q}^*,r^*,\gamma^*,\mathbf{w},d)\leq 0$ for all $(\mathbf{w},d)\in \mathbb{H}$ and 
$\operatorname{card}\mathbb{H}=s$.
\label{def:Robusts}
\end{defn}

\section{Local reduction for optimal control}
\label{sec:LocalReduction}
We will now extend the local reduction methods\cite{Infinitely_Blankenship1976} to robust nonlinear optimal control.

\subsection{Algorithm}

The local reduction method\cite{Infinitely_Blankenship1976} consists in iteratively solving finite-dimensional optimization problems. We use the local reduction methods for the problem \eqref{eq:Pgamma} or \eqref{eq:Prevised} by iteratively solving optimal control problems parametrised by scenarios. The iterations alternate between solving minimization and maximization steps that will now be described.

\subsection{Minimization step}
The local reduction algorithm for robust optimal control is shown in Algorithm~\ref{alg:LocalReductionGeneral}. The proposed algorithm in iteration $j$ solves an optimal control problem of the form \eqref{eq:Pgamma} or \eqref{eq:Prevised} assuming that the number of scenarios $\operatorname{card} \mathbb{H}_j$ at step~$j$ is finite. The algorithm needs an initial guess for the parameters of the controller. For instance, the initial guess can be obtained by solving \eqref{eq:Prevised} for one scenario, i.e.\ $\operatorname{card}\mathbb{H}_1=1$. Alternatively, the initial guess can be found by solving \eqref{eq:Prevised} for a small number of scenarios, obtained for example from a coarse discretization of the uncertainty set \cite{transformation_Schwientek2020}. 

In the first step of Algorithm~\ref{alg:LocalReductionGeneral} (line 3), the algorithm checks whether worst-case scenarios exist that would lead to a violation of constraints~\eqref{eq:AllConstraints}. If no constraints are violated (line 4), the current parameters give a robust solution to the current set of scenarios $\mathbb{H}_j$. If there exists at least one violated constraint, then a scenario corresponding to the maximum constraint violation is added to the scenario set $\mathbb{H}_{j+1}$ in the next iteration (line 7). The new set $\mathbb{H}_{j+1}$  is then used to find a new set of control parameters (line~9). The algorithm ends if no new scenarios are added, i.e.\ $\operatorname{card}\mathbb{H}_{j}= \operatorname{card}\mathbb{H}_{j-1}$. 

In this work, any scenario corresponding to the maximum constraint violation can be added to the set of scenarios. However, it has been shown that computational performance may be improved if multiple scenarios are added\cite{global_Tsoukalas2008}.

\begin{algorithm2e}[t]
\SetAlgoLined
  \KwInput{Initial guess for $\mathbf{q}$, $r$, $\gamma$ and $\mathbb{H}_1\neq\emptyset$}
  \KwOutput{Optimal  $\mathbf{q}^*$, $r^*$, $\gamma^*$, set of scenarios $\mathbb{H}^*$}
  Set $\mathbf{q}^1\leftarrow\mathbf{q}$, $r^1\leftarrow r$, $\gamma^1\leftarrow\gamma$, $j\leftarrow 1$
  
  \Repeat{$\operatorname{card}\mathbb{H}_{j}= \operatorname{card}\mathbb{H}_{j-1}$}{
Compute $G_{\max}(\mathbf{q}^j,r^j,\gamma^j,\mathbb{W}^N\times\mathbb{D})$
and a maximizer $(\mathbf{x}^j,\mathbf{u}^j,\mathbf{w}^j,d^j)$ by solving~\eqref{eq:Gmax} with $\mathbb{H}=\mathbb{W}^N\times \mathbb{D}$.

\eIf{$G_{\max}(\mathbf{q}^j,r^j,\gamma^j,\mathbb{W}^N\times\mathbb{D})\leq 0$}{
\begin{equation*}
  \mathbb{H}_{j+1}\leftarrow\mathbb{H}_{j}
  \end{equation*}  }{
  Add new scenario 
  \begin{equation}
  \mathbb{H}_{j+1}\leftarrow\mathbb{H}_{j}\cup \{(\mathbf{w}^j,d^j)\}
  \label{eq:AddScenarios}
  \end{equation}
      
  Find a $(\mathbf{q}^{j+1},r^{j+1},\gamma^{j+1})$ that solves $\mathcal{P}_N(\mathbb{H}_{j+1})$ using~\eqref{eq:Pgamma} or \eqref{eq:Prevised}.
}
  Set $j\leftarrow j+1$

      Set $(\mathbf{q}^*,r^*,\gamma^*)\leftarrow(\mathbf{q}^j,r^j,\gamma^j)$  and $\mathbb{H}^*\leftarrow \mathbb{H}_j$.
 }
 \caption{Exact local reduction method\label{alg:LocalReductionGeneral}}
\end{algorithm2e}

\subsection{Maximization step}
The maximization step consists in solving \eqref{eq:Gmax} with $\mathbb{H}=\mathbb{W}^N\times\mathbb{D}$. Solving \eqref{eq:Gmax} is equivalent to solving $n_{g}\cdot (N-1)+1$ optimization problems, where $n_g$ denotes the number of elements in the vector function $g(\cdot)$ from constraints in \eqref{eq:InitCstr}. The algorithm is presented in Algorithm~\ref{alg:LocalReductionMaximisation}. Without loss of generality, we assume that the first constraint to include in the maximization problem corresponds to the reformulated objective function \eqref{eq:Objective}. A scenario that corresponds to maximal value of this constraint is added to an auxiliary set $\mathbb{K}$. The remaining $n_g\cdot (N-1)$ constraints are included as objectives in the respective maximization problems (line four to eight in Algorithm~\ref{alg:LocalReductionMaximisation}). Note that the problem corresponding to the objective (line 2) and all the problems corresponding to the constraints (line four to eight) can be solved in parallel.

All maximization problems are subject to the same equality constraints capturing the dynamics. This formulation allows us to treat the maximization problems as optimal control problems and preserve the sparsity of the relevant Jacobians and Hessians. We solve the maximization problems as optimal control problems where $\mathbf{q}$, $r$, and $\gamma$ are known parameters whereas $\mathbf{w}$ and $d$ are treated as unknown inputs. Thus, the maximization problems can be solved using any off-the-shelf solver for optimal control problems. 

\begin{algorithm2e}[t]
\SetAlgoLined
  \KwInput{Current values of $\mathbf{q}^j$, $r^j$, $\gamma^j$}
  \KwOutput{Worst case scenario $(\mathbf{w}^{j},d^{j})$ in iteration $j$}
  
        Find any $\mathbf{x}^*,\mathbf{u}^*,\mathbf{w}^*,d^*$   that solves:
        \begin{subequations}
        \begin{align}
        \max_{\mathbf{x},\mathbf{u},\mathbf{w},d} & \quad J_N(\mathbf{x},\mathbf{u},\mathbf{w},d)-\gamma^j\nonumber   \\
         \quad \text{s.t. } &
        (\mathbf{x},\mathbf{u}) = \mathbf{z}(\mathbf{q}^j,r^j,\mathbf{w},d)\nonumber\\
        & (\mathbf{w},d) \in \mathbb{W}^N\times\mathbb{D}\nonumber
        \end{align}
        \end{subequations}
        
        Set $\mathbb{K}\leftarrow{(\mathbf{w}^*,d^*,J_N(\mathbf{x}^*,\mathbf{u}^*,\mathbf{w}^*,d^*)-\gamma^j)}$

  \For{$h=1,\ldots,n_g$}{
    \For{$k=1,\ldots,N-1$}{
          Find any $\mathbf{x}^*,\mathbf{u}^*,\mathbf{w}^*,d^*$  that solves:
        \begin{subequations}
\begin{align}
\max_{\mathbf{x},\mathbf{u},\mathbf{w},d} & \quad e_h g_k(x_k,u_k,w_k,d)\nonumber   \\
 \quad \text{s.t. } &
(\mathbf{x},\mathbf{u}) = \mathbf{z}(\mathbf{q}^j,r^j,\mathbf{w},d)\nonumber\\
        & (\mathbf{w},d) \in \mathbb{W}^N\times\mathbb{D}\nonumber
\end{align}
\end{subequations}
        Set $\mathbb{K}\leftarrow\mathbb{K}\cup{(\mathbf{w}^*,d^*,e_h g_k(x_k^*,u_k^*,w_k^*,d^*))}$
        
        }
    }
    Set $v^*\leftarrow \max \lbrace v_3 \mid (v_1,v_2,v_3)\in\mathbb{K}\rbrace$;
    
    Choose any $(\mathbf{w}^{j},d^{j})\in \lbrace (v_1,v_2) \mid (v_1,v_2,v^*)\in \mathbb{K} \rbrace$
     \caption{Maximization - line 3 in Algorithm~\ref{alg:LocalReductionGeneral} \label{alg:LocalReductionMaximisation}}
\end{algorithm2e}
Solving~\eqref{eq:Gmax} with $\mathbb{H}=\mathbb{W}^N\times\mathbb{D}$ corresponds to lines four to eight in Algorithm~\ref{alg:LocalReductionMaximisation} and can be done by solving a number of finite-dimensional optimization problems in parallel\cite{parallel_Zakovic2003}. 

\subsection{Analysis}

\subsubsection{Convergence of Algorithm \ref{alg:LocalReductionGeneral}}
The convergence of local reduction method in the case of the form \eqref{eqn:SemiInf} was shown in several previous works \cite{Infinitely_Blankenship1976,Semi_Reemtsen1998,Global_Mitsos2011}. The authors required that the sets $\mathcal{A}$ and $\mathcal{B}$ in \eqref{eqn:SemiInf} are non-empty and compact, and that the functions $Q$ and $R$ are continuous with respect to all their arguments. They showed that the sequence of solutions obtained for a sequence of finite and countable subsets of $\mathcal{B}$ converges to the solution of \eqref{eqn:SemiInf}. A discussion on convergence rate of local reduction methods also follows\cite{adaptive_Seidel2020}. We show in Theorem \ref{thm:Convergence} when the Algorithm \ref{alg:LocalReductionGeneral} solves problem~\eqref{eq:Plifted}.

% \begin{thm}
% \label{thm:Convergence}
% The solution $(\mathbf{q}^*,r^*,\gamma^*)$ obtained from Algorithm \ref{alg:LocalReductionGeneral} for a non-empty and compact set $\mathbb{W}^N\times\mathbb{D}$ converges to the solution of \eqref{eq:Plifted} if the set $\mathbb{F}$ such that $(\mathbf{q},r,\gamma)\in\mathbb{F}\subset \R^{n_q}\times\R^{n_r}\times\R$ is non-empty and compact.
% \end{thm}
\begin{thm}
\label{thm:Convergence}
The solution $(\mathbf{q}^*,r^*,\gamma^*)$ obtained from Algorithm \ref{alg:LocalReductionGeneral} for a non-empty and compact set $\mathbb{W}^N\times\mathbb{D}$ converges to the solution of \eqref{eq:Plifted} if the set $\mathbb{F}\subset \R^{n_q}\times\R^{n_r}\times\R$ such that $G:\mathbb{F}\times\mathbb{W}^N\times\mathbb{D}\rightarrow \mathbb{R}^{n_g}$ is non-empty and compact.%  $(\mathbf{q},r,\gamma)\in\mathbb{F}\subset \R^{n_q}\times\R^{n_r}\times\R$ 
\end{thm}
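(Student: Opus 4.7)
The plan is to reduce this statement to the classical convergence results for local reduction on semi-infinite programs of the form \eqref{eqn:SemiInf}, in particular Blankenship--Falk \cite{Infinitely_Blankenship1976} and its extensions, and to do the reduction through Theorem \ref{thm:MainResult}. The first step is to rewrite problem \eqref{eq:Plifted} in the semi-infinite form \eqref{eqn:SemiInf} via Theorem \ref{thm:MainResult}, with $\theta := (\mathbf{q},r,\gamma)$, $\rho := (\mathbf{w},d)$, $Q(\theta) := \gamma$, and $R(\theta,\rho) := G(\mathbf{z}(\mathbf{q},r,\mathbf{w},d),\mathbf{w},d,\gamma)$. Since Algorithm \ref{alg:LocalReductionGeneral} is exactly the Blankenship--Falk cutting-plane scheme applied to this reformulation, it suffices to verify the standard hypotheses required by the cited convergence theorems and then lift the conclusion back to \eqref{eq:Plifted} through the equivalence established in Theorem \ref{thm:MainResult}.

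Next I would check the four hypotheses one by one. The set $\mathcal{B} = \mathbb{W}^N \times \mathbb{D}$ is non-empty and compact by assumption, and the set $\mathcal{A}$ can be taken to be $\mathbb{F}$, which is non-empty and compact by hypothesis. The objective $Q(\theta) = \gamma$ is trivially continuous. For $R$, I would argue in two stages: first, Assumption \ref{ass:Bounded} together with continuous differentiability of the $f_k$ and $\pi_k$ in \eqref{eq:Trajectory} implies that the trajectory map $(\mathbf{q},r,\mathbf{w},d) \mapsto \mathbf{z}(\mathbf{q},r,\mathbf{w},d)$ is continuous on $\mathbb{F} \times \mathbb{W}^N \times \mathbb{D}$; second, continuous differentiability of $g_k$, $J_f$ and $\ell_k$ makes $G$ in \eqref{eq:AllConstraints} a maximum of finitely many continuous functions and therefore continuous. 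Once continuity of $R$ is in hand, Berge's maximum theorem applied to \eqref{eq:Gmax} over the compact set $\mathbb{W}^N \times \mathbb{D}$ yields both attainment of the inner maximum in Algorithm \ref{alg:LocalReductionMaximisation} (so each iteration of Algorithm \ref{alg:LocalReductionGeneral} is well-defined) and upper semi-continuity of $G_{\max}$ as a function of $(\mathbf{q},r,\gamma)$, which is what the convergence arguments in \cite{Infinitely_Blankenship1976,Semi_Reemtsen1998,Global_Mitsos2011} require.

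With the hypotheses verified, I would then quote the convergence result: the sequence $(\mathbf{q}^j,r^j,\gamma^j)$ produced by Algorithm \ref{alg:LocalReductionGeneral} admits a subsequence converging to a limit $(\mathbf{q}^*,r^*,\gamma^*)$ with $G_{\max}(\mathbf{q}^*,r^*,\gamma^*,\mathbb{W}^N \times \mathbb{D}) \leq 0$, and $\gamma^*$ equals the optimal value of the semi-infinite reformulation. Because Theorem \ref{thm:MainResult} establishes equivalence between \eqref{eq:Plifted} and this semi-infinite reformulation, this limit is a solution of \eqref{eq:Plifted}, giving the claim.

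I expect the main obstacle to be the argument that the inner maximization in line 3 of Algorithm \ref{alg:LocalReductionGeneral} is actually solved to global optimality, since the published convergence results rely on this. The theorem statement implicitly treats the maximization as exact (hence the name \emph{exact local reduction method}), so in the proof I would emphasise that the result is conditional on obtaining the true $G_{\max}$ at every iteration, deferring the discussion of local solvers to the subsequent error-bound analysis. A secondary subtlety is the termination criterion $\operatorname{card} \mathbb{H}_j = \operatorname{card} \mathbb{H}_{j-1}$: when the algorithm terminates in finitely many steps the conclusion is immediate, while for an infinite run convergence must be read as convergence of an accumulation point in the compact set $\mathbb{F}$, which is exactly what compactness of $\mathbb{F}$ buys us.
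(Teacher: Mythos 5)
Your proposal is correct and follows essentially the same route as the paper's own proof: reduce \eqref{eq:Plifted} to the semi-infinite form via Theorem \ref{thm:MainResult} with $Q(\theta):=\gamma$ and $R(\theta,\rho):=G(\cdot)$, verify continuity of $Q$ and $R$ together with non-emptiness and compactness of $\mathcal{A}=\mathbb{F}$ and $\mathcal{B}$, and then invoke the classical convergence results for the Blankenship--Falk scheme \cite{Infinitely_Blankenship1976,Global_Mitsos2011}. Your additional remarks (Berge's maximum theorem for attainment of the inner maximum, the exactness of the maximization step, the termination criterion) are sensible elaborations of hypotheses the paper leaves implicit, but they do not change the argument.
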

\begin{proof}
From Theorem \ref{thm:MainResult}, we have $\theta:=(\mathbf{q},r,\gamma)$ and $\rho:=(\mathbf{w},d)$, $\mathcal{A}:=\mathbb{F}$, $\mathcal{B}:=\mathbb{H}$. In \eqref{eq:Prevised}, we take $Q(\theta):=\gamma$ which is linear and thus continuous. Then we have $R(\theta,\rho):=G(\mathbf{q},r,\gamma,\mathbf{w},d)$ which is continuous because both $\max_{h,k} e_h^{T}g_k(\cdot,\cdot,\cdot,\cdot)$ and $J_N(\cdot,\cdot,\cdot,\cdot)$ are continuous. The proof follows Lemma 2.2 from a previous study\cite{Global_Mitsos2011}.
\end{proof}

Theorem \ref{thm:Convergence} requires the constraints in $G$ to be defined over a compact set $\mathbb{F}\times\mathbb{W}^N\times\mathbb{D}$. As a direct consequence, we obtain Remark \ref{rem:BoundedG}.
\begin{rem}[Boundedness of constraints]
    \label{rem:BoundedG}
     $G_{\max}$ is bounded on $\mathbb{F}\times\mathbb{W}^N\times\mathbb{D}$, i.e.
    \begin{equation}
        \exists m,M\in\R: \forall (\mathbf{q},r,\gamma,\mathbf{w},d)\in \mathbb{F}\times\mathbb{W}^N\times\mathbb{D}:\; m\leq G_{\max}(\mathbf{q},r,\gamma,\mathbf{w},d)\leq M
    \end{equation}
\end{rem}
\begin{proof}
The proof follows directly from the extreme value theorem because $G_{\max}(\mathbf{q},r,\gamma,\mathbf{w},d)$ is continuous over a compact set $\mathbb{F}\times\mathbb{W}^N\times\mathbb{D}$.
\end{proof}
Boundedness of $G$ ensures that the maximization step presented in Algorithm \ref{alg:LocalReductionMaximisation} is well-posed. Section \ref{sec:SimilarityScenarios} will further demonstrate the impact of boundedness on the solution obtained from Algorithm \ref{alg:LocalReductionGeneral}. 

We also note that similarly to Assumption \ref{ass:Bounded}, the requirement of boundedness of $G$ need not imply stability of the dynamics \eqref{eq:InitialDynamics}. In particular, the method can be used for solving finite horizon optimal control problems with unstable linear dynamics affected by uncertainty, as will be demonstrated in Section \ref{sec:NumericalResults}.

\subsubsection{Constraint dropping}
The method presented in Algorithm \ref{alg:LocalReductionGeneral} assumes that the cardinality of the sets $\mathbb{H}_j$ is increasing with $j$, i.e. $\mathbb{H}_j\subset\mathbb{H}_{j+1}$ for all $j$. The increasing cardinality corresponds to an increase in the size of the optimisation problem in line nine in Algorithm \ref{alg:LocalReductionGeneral}. The authors\cite{Infinitely_Blankenship1976} provide additional convexity conditions allowing one to drop elements from the set $\mathbb{H}_j$. In particular, they require \eqref{eqn:SemiinfCost} to be strictly convex with respect to $\theta$ and  $R(\theta)$ to be convex with respect to $\theta$ for $\rho\in\mathcal{B}$. Following Theorem \ref{thm:MainResult}, the conditions provided in previous works\cite{Infinitely_Blankenship1976} correspond to strict convexity of \eqref{eq:RevisedCost}, and convexity of $G(\mathbf{z}(\cdot,\cdot,\mathbf{w},d),\mathbf{w},d,\cdot)$ for any $(\mathbf{w},d)\in\mathbb{W}^N\times\mathbb{D}$. In the current work, we do not assume convexity of \eqref{eq:AllConstraints} and \eqref{eq:RevisedCost} is only convex, not strictly convex, so dropping constraints from the set $\mathbb{H}_j$ does not guarantee convergence of the local reduction algorithm. To enable constraint dropping, we consider a special case of problem \eqref{eq:Plifted} where the cost is independent of the uncertainty:

\begin{subequations}
\label{eq:Pnew}
\begin{align}
 \mathcal{P}_N(\mathbb{H}):\quad \min_{{\substack{\mathbf{q},r\\ \mathbf{x}^i,\mathbf{u}^i,\\\forall i\in\mathbb{J}}}}& \quad J_N(\mathbf{q},r) \label{eq:Costnew}
  \\
 \text{s.t. } 
g_k(x_k^i,u_k^i,w_k^i,d^i)\leq 0,\ &\forall i\in\mathbb{J}, k=0,\ldots,N-1\\
(\mathbf{x}^i,\mathbf{u}^i) = \mathbf{z}(\mathbf{q},r,\mathbf{w}^i,d^i),\ &\forall i\in\mathbb{J}
\end{align}
\end{subequations}
Then we can adjust Algorithm \ref{alg:LocalReductionGeneral} to Problem \eqref{eq:Pnew} to enable dropping constraints. First let us rewrite \eqref{eq:AllConstraints} as:
\begin{equation}
G(\mathbf{x}^i,\mathbf{u}^i,\mathbf{w}^i,d^i):=\max_{h,k}\; e_h^T g_k(x^i_k,u^i_k,w^i_k,d^i)
\label{eq:AllConstraintsNew}    
\end{equation}
and \eqref{eq:Gmax} as:
\begin{equation}
    G_{\max,\text{new}}(\mathbf{q},r,\mathbb{H}):=\max_{
{\substack{(\mathbf{w},d)\in\mathbb{H}\\ (\mathbf{x},\mathbf{u})=\mathbf{z}(\mathbf{q},r,\mathbf{w},d)}}
} G(\mathbf{x},\mathbf{u},\mathbf{w},d)
\end{equation}
where $(\mathbf{q},r)\in\mathbb{F}_{\text{new}}\subset \R^{n_q}\times\R^{n_r}$ are found in a subset of the whole search space. 

\begin{thm}[Adapted\cite{Infinitely_Blankenship1976}]
\label{thm:CstrDropping}
If $J_N(\cdot,\cdot)$ is strictly convex, \eqref{eq:AllConstraintsNew} is convex w.r.t. $x_k^i$ and $u_k^i$ for any $(w^i_k,d^i)\in\mathbb{W}\times\mathbb{D}$, and $\mathbb{F}_{\text{new}}$ is convex, then \eqref{eq:AddScenarios} can be replaced by:
\begin{equation}
      \mathbb{H}_{j+1}\leftarrow\mathbb{H}_{j}\cup (\mathbf{w}^j,d^j)\setminus \mathbb{Z}_j
      \label{eq:ModifiedDropping}
\end{equation}
where
\begin{equation}
    \mathbb{Z}_j:=\lbrace (\mathbf{w},d)\in\mathbb{H}_{j}\mid  G_{\max,\text{new}}(\mathbf{q}^j,r^j,\mathbb{H}_{j})<0 \rbrace
\end{equation}
and the solution of the modified algorithm will converge to the solution of \eqref{eq:Pnew}.
\end{thm}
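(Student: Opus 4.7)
The plan is to reduce problem~\eqref{eq:Pnew} to the semi-infinite form \eqref{eqn:SemiInf} in exactly the same style as Theorem~\ref{thm:MainResult}, but without the auxiliary variable $\gamma$ since the cost no longer depends on uncertainty. Then I would invoke the original Blankenship--Falk constraint-dropping result, whose hypotheses are: strict convexity of the outer objective in the decision variable, convexity of each constraint function in the decision variable for every value of the parameter, and convexity of the decision-variable search set. The theorem's three assumptions are tailored precisely to supply these three ingredients in the optimal control setting.

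Concretely, I would set $\theta:=(\mathbf{q},r)\in\mathcal{A}:=\mathbb{F}_{\text{new}}$, $\rho:=(\mathbf{w},d)\in\mathcal{B}:=\mathbb{W}^N\times\mathbb{D}$, $Q(\theta):=J_N(\mathbf{q},r)$, and $R(\theta,\rho):=G(\mathbf{z}(\mathbf{q},r,\mathbf{w},d),\mathbf{w},d)$ with $G$ as in \eqref{eq:AllConstraintsNew}. Strict convexity of $Q$ is immediate from the hypothesis on $J_N$, and convexity of $\mathcal{A}$ is immediate from the hypothesis on $\mathbb{F}_{\text{new}}$. Once the convexity of $R(\cdot,\rho)$ is established, the Blankenship--Falk argument delivers the conclusion: the strictly convex finite subproblem solved in line nine of Algorithm~\ref{alg:LocalReductionGeneral} has a unique minimizer, and any scenario in $\mathbb{H}_j$ whose associated constraint is strictly satisfied at that minimizer, i.e.\ belongs to $\mathbb{Z}_j$, is locally inactive and may be removed without perturbing the iterate; the same worst-case test in line three then drives the iteration to a solution of the full semi-infinite problem, which by Theorem~\ref{thm:MainResult} (adapted to this $\gamma$-free setting) coincides with the solution of~\eqref{eq:Pnew}.

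The main obstacle is verifying that $R(\cdot,\rho)$ is convex in $\theta$. The hypothesis of the theorem only asserts convexity of $G$ in $(x_k^i,u_k^i)$ for fixed $(w_k^i,d^i)$, so convexity of the composition $(\mathbf{q},r)\mapsto G(\mathbf{z}(\mathbf{q},r,\mathbf{w},d),\mathbf{w},d)$ requires, in addition, that the trajectory map $\mathbf{z}(\cdot,\cdot,\mathbf{w},d)$ be affine in $(\mathbf{q},r)$ for each fixed $(\mathbf{w},d)$; this is automatic for linear dynamics~\eqref{eq:InitialDynamics} closed with an affinely parameterised policy $\pi_k$, which is the regime in which constraint dropping is most useful and matches the linear case study in Section~\ref{sec:NumericalResults}. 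I would state this affineness as part of the standing interpretation of the composition, then note that the composition of an affine map with a convex map is convex, which discharges condition~(ii) and allows the Blankenship--Falk convergence result to be applied verbatim. The remainder of the argument is then bookkeeping on the iterates produced by~\eqref{eq:ModifiedDropping}, showing that the cost sequence $J_N(\mathbf{q}^{j},r^{j})$ is non-decreasing, bounded above by the optimal value of~\eqref{eq:Pnew}, and that every accumulation point is feasible for the full semi-infinite problem by continuity of $G_{\max,\text{new}}$ together with the termination test in line three.
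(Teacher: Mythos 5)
Your proposal takes essentially the same route as the paper: identify $\theta:=(\mathbf{q},r)$, $\rho:=(\mathbf{w},d)$, $Q(\theta):=J_N(\mathbf{q},r)$, $R(\theta,\rho):=G(\mathbf{z}(\mathbf{q},r,\mathbf{w},d),\mathbf{w},d)$ and $\mathcal{A}:=\mathbb{F}_{\text{new}}$, then invoke the Blankenship--Falk constraint-dropping results (the paper cites their Theorems 2.2--2.4 and stops there). Where you go beyond the paper is in flagging that the stated hypothesis --- convexity of \eqref{eq:AllConstraintsNew} in $(x_k^i,u_k^i)$ for fixed $(w_k^i,d^i)$ --- does not by itself yield convexity of $R(\cdot,\rho)$ in $\theta$; you correctly observe that the trajectory map $\mathbf{z}(\cdot,\cdot,\mathbf{w},d)$ must additionally be affine in $(\mathbf{q},r)$ for the composition to be convex, which holds for linear dynamics with an affinely parameterised policy but not in general. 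The paper's one-line proof is silent on this point, so your version is the more careful one; making the affineness assumption explicit, or restating the convexity hypothesis directly in terms of $(\mathbf{q},r)$, is the right fix.
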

\begin{proof}
Taking $\theta:=(\mathbf{q},r)$, $\rho:=(\mathbf{w},d)$, $Q(\theta):=J_N(\mathbf{q},r)$, $R(\theta,\rho):=G(\mathbf{z}(\mathbf{q},r,\mathbf{w},d),\mathbf{w},d)$, $\mathcal{A}:=\mathbb{F}_{\text{new}}\subset \R^{n_q}\times\R^{n_r}$ in \eqref{eq:Pnew} we obtain the form of \eqref{eqn:SemiInf}. Then the proof follows from Theorems 2.2--2.4 in \cite{Infinitely_Blankenship1976}.
\end{proof}

\subsection{Inexact local reduction}
To simplify the exact local reduction from Algorithm \ref{alg:LocalReductionGeneral} we propose an inexact formulation of the algorithm focusing on numerical properties of the solvers used for the optimal control problems. The exact algorithm for local reduction presented in Algorithm \ref{alg:LocalReductionGeneral} assumes that the maximisation step finds the global solutions to the maximisation problem and only one scenario obtained in this step is then added to the scenario set. These assumptions are often difficult to satisfy. In practice, there are two possible cases:
\begin{itemize}
\item The maximisation step in a given iteration has multiple solutions in general, but only a limited number is used,
\item The maximisation step is solved approximately.
\end{itemize}
In particular, we will focus on analysing the case when local solvers are used. 

If a global solver is used, but only a limited number of scenarios is added, the local reduction algorithm needs more iterations to find a solution than in the case of adding all the scenarios~\cite{Cutting_Mutapcic2009}. Therefore there exists a trade-off between the speed of convergence of the local reduction method and the size of the problem solved in the minimisation step. We show the impact of approximate solutions by considering \emph{similarity of scenarios}, i.e.\ when the interim worst-case scenarios from line 3 in Algorithm \ref{alg:LocalReductionGeneral} are considered similar.

\begin{defn}[Similar scenarios]
\label{def:ScenarioSame}
Let $(\mathbf{w}^1,d^1)$ and $(\mathbf{w}^2,d^2)$ be two scenarios and let $\epsilon_w\geq 0$ and $\epsilon_d\geq 0$ be fixed parameters. The two scenarios are similar if
\begin{equation}
\frac{1}{N}\|\mathbf{w}^1-\mathbf{w}^2\|_2^2\leq \epsilon_{w}
\label{eq:Normw}
\end{equation} 
and
\begin{equation}
\|d^1-d^2\|_2^2\leq \epsilon_{d}.
\label{eq:Normd}
\end{equation} 

\end{defn}

Using Definition \ref{def:ScenarioSame}, we modify line 7 in Algorithm \ref{alg:LocalReductionGeneral} so that the scenario $(\mathbf{w}^j,d^j)$ in iteration $j$ is added to the current set of scenarios if it is not similar to any of the scenarios in $\mathbb{H}_j$. Algorithm \ref{alg:LocalReductionInexact} summarizes the inexact local reduction method with the evaluation of when the scenarios are similar.

\begin{algorithm2e}[t]
\SetAlgoLined
  \KwInput{Initial guess for $\mathbf{q}$, $r$, $\gamma$, $\mathbb{H}_1\neq\emptyset$, and the tolerances $\epsilon_w$ and $\epsilon_d$}
  \KwOutput{Optimal  $\mathbf{q}^*$, $r^*$, $\gamma^*$, set of scenarios $\mathbb{H}^*$ that includes the worst-case}
  Set $\mathbf{q}^1\leftarrow\mathbf{q}$, $r^1\leftarrow r$, $\gamma^1\leftarrow\gamma$ $j\leftarrow 1$ 
  %\;

\Repeat{$\operatorname{card}\mathbb{H}_{j}= \operatorname{card}\mathbb{H}_{j-1}$}{
Compute $G_{\max}(\mathbf{q}^j,r^j,\gamma^j,\mathbb{W}^N\times\mathbb{D})$
and a maximizer $(\mathbf{x}^j,\mathbf{u}^j,\mathbf{w}^j,d^j)$ by solving~\eqref{eq:Gmax} with $\mathbb{H}=\mathbb{W}^N\times \mathbb{D}$.

\eIf{$G_{\max}(\mathbf{q}^j,r^j,\gamma^j,\mathbb{W}^N\times\mathbb{D})\leq 0$}{
\begin{equation}
  \mathbb{H}_{j+1}\leftarrow\mathbb{H}_{j}
  \end{equation}
  }{
      \ForAll{$(\mathbf{w},d)\in \mathbb{H}_{j}$}{
      \eIf{$\frac{1}{N}\|\mathbf{w}^j-\mathbf{w}\|_2^2 > \epsilon_w$ \upshape{\textbf{or}} $\|d^j-d\|_2^2 > \epsilon_d$}{
   \begin{equation}
   \mathbb{H}_{j+1}\leftarrow\mathbb{H}_{j}\cup (\mathbf{w}^j,d^j)
   \end{equation}
    }{
\begin{equation}
  \mathbb{H}_{j+1}\leftarrow\mathbb{H}_{j}
  \end{equation}    }
      }

    Find a $(\mathbf{q}^{j+1},r^{j+1},\gamma^{j+1})$ that solves $\mathcal{P}_N(\mathbb{H}_{j+1})$ using~\eqref{eq:Pgamma} or \eqref{eq:Prevised}.
    
} 
      Set $(\mathbf{q}^*,r^*,\gamma^*)\leftarrow(\mathbf{q}^j,r^j,\gamma^j)$  and $\mathbb{H}^*\leftarrow \mathbb{H}_j$.

       Set $j\leftarrow j+1$.

 }
 \caption{Inexact local reduction method\label{alg:LocalReductionInexact}}
\end{algorithm2e}

\subsubsection{Impact of similarity of scenarios}
\label{sec:SimilarityScenarios}
The algorithm provides a solution that is robust to $s$ scenarios in the sense of Definition \ref{def:Robusts}, where $s=\text{card}\; \mathbb{H}^*$. From Remark~\ref{rem:BoundedG} and \eqref{eq:AllConstraints}, we already have that for any two scenarios $(\textbf{w}^1,d^1)$, $(\textbf{w}^2,d^2)$, the constraints $g_k$ are bounded and hence: 
\begin{equation}\| g_k(z_k^1, w^1_{k},d^1_{k})-g_k(z_k^2, w^2_{k},d^2_{k}) \|_2\leq \| g_k(z_k^1, w^1_{k},d^1_{k}) \|_2 + \| g_k(z_k^2, w^2_{k},d^2_{k}) \|_2\leq 2M.
\end{equation}
A tighter bound can be obtained if we use Assumption \ref{ass:Bounded} and note that for a chosen parameterization $\textbf{q}^*,r^*,\gamma^*$ the trajectories $\mathbf{z}$ are BIBO-stable w.r.t. disturbances, i.e.
\begin{equation}
   \forall \varsigma_w >0:  \|(\mathbf{w},d)\|_2\leq \varsigma_w\Longrightarrow \exists \varsigma_z>0: \|\mathbf{z}\|_2\leq \varsigma_z
   \label{eq:BIBO}
\end{equation}
Theorem \ref{thm:CstrSatisfaction} uses \eqref{eq:BIBO} to show the impact of the similarity of scenarios on the constraint satisfaction.

\begin{thm}
\label{thm:CstrSatisfaction}
Let $(\textbf{w}^1,d^1)$, $(\textbf{w}^2,d^2)$ be two identical scenarios with $\epsilon_w=\frac{1}{N}\epsilon_w^*$ and $\epsilon_d=\epsilon_d^*$, and $\|(\textbf{w}^i,d^i)\|_2\leq \varsigma_w$ for $i=1,2$. Let $\textbf{q}^*,r^*,\gamma^*$ be the solution of \eqref{eq:Plifted} obtained for $(\textbf{w}^1,d^1)$ and let $\textbf{z}^1=\textbf{z}(\textbf{q}^*,r^*,\textbf{w}^1,d^1)$, $\textbf{z}^2=\textbf{z}(\textbf{q}^*,r^*,\textbf{w}^2,d^2)$. Then the constraint violation for $(\textbf{w}^2,d^2)$ is bounded and
\begin{equation}
    \| g_k(z_k^1, w^1_{k},d^1_{k})-g_k(z_k^2, w^2_{k},d^2_{k}) \|^2_2\leq L^2\left(\epsilon^*_w+\epsilon^*_d+4\varsigma_z^2\right)
    \label{eq:UpperBound}
\end{equation}
where $z_k^i$ is the trajectory $\textbf{z}^i$ at time $k$, $i=1,2$, and $L$ is a local Lipschitz constant for a given $k$.
\end{thm}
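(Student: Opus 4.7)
The plan is to exploit the fact that $g_k$ is continuously differentiable (as stated in Section~\ref{sec:ObjCstr}) and that both its trajectory and disturbance arguments live in a compact set, so that $g_k$ is locally Lipschitz on that set with some constant $L$. Concretely, the arguments $(z_k^i, w_k^i, d^i)$ lie in $B(0,\varsigma_z)\times\mathbb{W}\times\mathbb{D}$, which is compact by Assumption~\ref{ass:Bounded} and by compactness of $\mathbb{W}$ and $\mathbb{D}$. Hence there exists $L$ (depending on $k$) such that
\begin{equation}
\| g_k(z_k^1, w^1_{k},d^1) - g_k(z_k^2, w^2_{k},d^2) \|_2 \leq L \, \bigl\| (z_k^1,w_k^1,d^1) - (z_k^2,w_k^2,d^2) \bigr\|_2.
\end{equation}

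First I would square both sides and split the joint norm into three squared terms, one per argument:
\begin{equation}
\| g_k(z_k^1, w^1_{k},d^1) - g_k(z_k^2, w^2_{k},d^2) \|_2^2 \leq L^2 \bigl( \|z_k^1 - z_k^2\|_2^2 + \|w_k^1 - w_k^2\|_2^2 + \|d^1 - d^2\|_2^2 \bigr).
\end{equation}
Next I would bound each term separately. For the disturbance difference, similarity~\eqref{eq:Normw} with $\epsilon_w = \epsilon_w^*/N$ gives $\|\mathbf{w}^1-\mathbf{w}^2\|_2^2 \leq \epsilon_w^*$, and since $\|w_k^1-w_k^2\|_2^2 \leq \|\mathbf{w}^1-\mathbf{w}^2\|_2^2$ this bounds the $w$-term by $\epsilon_w^*$. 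Assumption \eqref{eq:Normd} immediately handles the $d$-term by $\epsilon_d^*$. For the trajectory term, I would apply the triangle inequality together with the BIBO bound \eqref{eq:BIBO}, which gives $\|z_k^i\|_2 \leq \varsigma_z$ for $i=1,2$, hence $\|z_k^1-z_k^2\|_2^2 \leq (2\varsigma_z)^2 = 4\varsigma_z^2$. Summing the three bounds yields exactly \eqref{eq:UpperBound}.

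The only genuinely subtle point is the existence of a uniform Lipschitz constant $L$: this is where continuous differentiability of $g_k$ together with the compactness provided by Assumption~\ref{ass:Bounded}, Remark~\ref{rem:BoundedG}, and the closedness of $\mathbb{W}\times\mathbb{D}$ are essential, because without a compact domain one could only claim continuity, not a usable Lipschitz constant. Everything else reduces to the triangle inequality and the two similarity hypotheses. Note that the bound on $\|z_k^1-z_k^2\|_2$ is coarse; a sharper bound via continuous dependence of $\mathbf{z}(\mathbf{q}^*,r^*,\cdot,\cdot)$ on $(\mathbf{w},d)$ is possible, but the simple $4\varsigma_z^2$ form stated in the theorem is precisely what falls out of the triangle inequality applied to the BIBO bound, so I would stop there.
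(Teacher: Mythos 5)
Your proposal is correct and follows essentially the same route as the paper's own proof: invoke the local Lipschitz property of $g_k$ from continuous differentiability on a compact domain, square the Lipschitz inequality, split the concatenated norm into the trajectory, time-varying, and constant-uncertainty parts, bound the $w$-component by $\epsilon_w^*$ via the similarity condition, the $d$-component by $\epsilon_d^*$, and the trajectory difference by $2\varsigma_z$ using the triangle inequality and the BIBO bound. Your extraction of the componentwise bound $\|w_k^1-w_k^2\|_2^2\leq\|\mathbf{w}^1-\mathbf{w}^2\|_2^2$ is just a more compact statement of the Pythagorean-theorem step the paper spells out explicitly, so there is no substantive difference.
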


\begin{proof}
We have
\begin{equation}
    \|\textbf{w}^1-\textbf{w}^2\|^2_2\leq \epsilon_w^* 
    \label{eq:TimeUncertainty}
\end{equation}
and 
\begin{equation}
    \|d^1-d^2\|_2^2\leq \epsilon_d^*.
        \label{eq:ConstUncertainty}
\end{equation}
We can write \eqref{eq:TimeUncertainty} as:
\begin{equation}
\begin{aligned}
    \|\textbf{w}^1-\textbf{w}^2\|_2^2=&{}\|(w_1^1-w_1^2,\ldots,w_N^1-w_N^2)\|^2\\
    =&{} \Big\|\sum\limits_{k=1}^Ne_k(w_k^1-w_k^2)\Big\|_2^2\\ 
    \leq&{} \epsilon_w^*\\
    \end{aligned}
\end{equation}
where $e_k$ is the $k^{\text{th}}$ row of an identity matrix $\mathbb{I}_{N} $.
Without loss of generality, we can assume that $w_k^1-w_k^2\neq 0$ for $k=1,\ldots, N$. Then from orthogonality of the set $\lbrace e_k(w_k^1-w_k^2)\rbrace_{k=1,\ldots,N}$, and using \eqref{eq:TimeUncertainty}, we get from the Pythagorean theorem:
\begin{equation}
\epsilon_w^*\geq \Big\|\sum\limits_{k=1}^Ne_k(w_k^1-w_k^2)\Big\|_2^2=\sum\limits_{k=1}^N\|e_k(w_k^1-w_k^2)\|_2^2.
\label{eq:Pitagoras}
\end{equation}
Taking into account that $\|e_k(w_k^1-w_k^2)\|_2^2\geq 0$ for all $k$ and $\|e_k(w_k^1-w_k^2)\|^2=\|(w_k^1-w_k^2)\|_2^2$ in \eqref{eq:Pitagoras}, we get:
\begin{equation}
    \|w_k^1-w_k^2\|_2^2\leq \epsilon_w^*
    \label{eq:RewrittenPitagoras}
\end{equation}
for all $k=1,\ldots,N$. Summing up \eqref{eq:ConstUncertainty} and \eqref{eq:RewrittenPitagoras} gives:
\begin{equation}
     \|w_k^1-w_k^2\|_2^2+ \|d^1-d^2\|_2^2\leq \epsilon_d^*+ \epsilon_w^*
\end{equation}
Then we have:
\begin{equation}
    \|(w_k^1-w_k^2,0)\|_2^2+ \|(0,d^1-d^2)\|_2^2= \|(w_k^1-w_k^2,d^1-d^2)\|_2^2
    \label{eq:PitagorasExtension}
\end{equation}
where $0$ denotes the origin of $\R^{n_d}$ and we used the Pythagorean theorem in $\R^{n_d+1}$. Then we have:
\begin{equation}
     \|(w_k^1-w_k^2,d^1-d^2)\|_2^2= \|(0,w_k^1-w_k^2,d^1-d^2)\|_2^2
     \label{eq:TimeConstUncertain}
\end{equation}
where $0$ denotes the origin of $\R^{n_x+n_u}$.

At the same time, from the assumption that $g_k$ is continuously differentiable with respect to all its arguments, we get the local Lipschitz condition \cite[Ch. 2]{Measure_Wheeden1977}:
\begin{equation}
     \| g_k(z_k^1, w^1_{k},d^1)-{}g_k(z_k^2, w^2_{k},d^2) \|_2 \leq L\|(z^1_k,w_k^1,d^1)-(z^2_k,w_k^2,d^2) \|_2
    \label{eq:Lipschitz}
\end{equation}
where $L$ is the local Lipschitz constant of $g_k$. Taking a square in \eqref{eq:Lipschitz} and using \eqref{eq:TimeConstUncertain}, we obtain:
\begin{equation}
    \begin{aligned}
        \| g_k(z_k^1, w^1_{k},d^1)-g_k(z_k^2, w^2_{k},d^2) \|_2^2 
        \leq &{}L^2\|(z^1_k,w_k^1,d^1)-(z^2_k,w_k^2,d^2) \|_2^2\\
        = &{} L^2\|(z_k^1-z_k^2,0,0)+(0,w_k^1-w_k^2,d^1-d^2)\|_2^2\\
        =&{}  L^2(\|z_k^1-z_k^2\|^2+\|(w_k^1-w_k^2,d^1-d^2)\|_2^2)\\
        \leq &{}  L^2\left((\|z_k^1\|+\|z_k^2\|)^2+\|(w_k^1-w_k^2,d^1-d^2)\|_2^2\right)\\
        \leq &{} L^2\left(\epsilon_d^*+ \epsilon_w^*+4\varsigma_z^2\right)
    \end{aligned}
\end{equation}
which concludes the proof.
\end{proof}
In a similar way the proof can be done for the constraint from \eqref{eq:CstrObjRef} because the cost $J_N$ is also continuously differentiable.

Theorem \ref{thm:CstrSatisfaction} shows that the satisfaction of constraints depends on:
\begin{itemize}
    \item The local Lipschitz constant of the constraints,
    \item The response of the system to disturbances,
    \item The choice of similarity of scenarios.
\end{itemize}
The local Lipschitz constant and the response to disturbances are inherent to the system. We note from Theorem \ref{thm:CstrSatisfaction} that the assumption about having a global solution to the maximisation problem in Algorithm \ref{alg:LocalReductionGeneral} is crucial to ensure no constraint violation. Even if the maximisation step is solved exactly and $\epsilon_w=\epsilon_d=0$, the constraint violation in \eqref{eq:UpperBound} is defined by $4L^2\varsigma_z^2$. At the same time, the choice of parameters in Definition \ref{def:ScenarioSame} affects the constraints violations. 

We also note that \eqref{eq:BIBO} makes no assumptions on $\varsigma_z$. For unstable systems, $\varsigma_z$ may be large, thus making the bound in Theorem~\ref{thm:CstrSatisfaction} uninformative. However, the value of $\varsigma_z$ depends also on the chosen control policy $\pi_k$ which can be used to modify the control invariant set\cite{Set_Blanchini1999} and thus tighten the bound.

Furthermore, the impact of the similarity of scenarios provides information about the solution if local, instead of global, optimization solvers are used to solve the maximisation problems.
To make this precise, let $(\textbf{w}^1,d^1)$ in Theorem \ref{thm:CstrSatisfaction} be a local solution from Algorithm \ref{alg:LocalReductionMaximisation}, and let $(\textbf{w}^2,d^2)$ be a global solution. Taking $\epsilon_w^*$ and $\epsilon_z^*$ such that $\|\textbf{w}^1-\textbf{w}^2\|_2^2\leq \epsilon_w^*$, $\|d^1-d^2\|_2^2\leq \epsilon_d^*$, and $\|(\textbf{w}^i,d^i)\|_2\leq \varsigma_w$ for $i=1,2$, the maximal constraint violation is bounded by \eqref{eq:UpperBound}.

As a direct consequence of Theorem \ref{thm:CstrSatisfaction} we obtain the following result:

\begin{thm}
    \label{thm:MultipleScenarios}
    Let us assume that Algorithm \ref{alg:LocalReductionInexact} finished with scenario set $\mathbb{H}^*$, $\text{card }\mathbb{H}^*=H$. The constraint violation for scenario $(\mathbf{w}^*,d^*)\notin \mathbb{H}^*$ is bounded by \eqref{eq:UpperBound} with $\epsilon_w^*=\max_{i=1,\ldots,H}\|\mathbf{w}^i-\mathbf{w}^*\|^2_2$ and $\epsilon_d^*=\max_{i=1,\ldots,H}\|d^i-d^*\|^2_2$.
\end{thm}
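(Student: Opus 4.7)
The plan is to reduce to Theorem \ref{thm:CstrSatisfaction} scenario by scenario. For each $i\in\{1,\ldots,H\}$, apply Theorem \ref{thm:CstrSatisfaction} to the pair $(\mathbf{w}^i,d^i)\in\mathbb{H}^*$ and $(\mathbf{w}^*,d^*)$, treating the former as ``scenario~1'' (the one whose associated parameterisation $(\mathbf{q}^*,r^*,\gamma^*)$ was produced by Algorithm \ref{alg:LocalReductionInexact}) and the latter as ``scenario~2''. Choose the hypothesis constants as $\epsilon_w^{(i)}:=\|\mathbf{w}^i-\mathbf{w}^*\|_2^2$ and $\epsilon_d^{(i)}:=\|d^i-d^*\|_2^2$, so that conditions \eqref{eq:Normw}--\eqref{eq:Normd} hold with equality and are trivially met. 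The uniform bound $\varsigma_w$ on $\|(\mathbf{w},d)\|_2$ follows from compactness of $\mathbb{W}^N\times\mathbb{D}$, while $\varsigma_z$ comes from Assumption \ref{ass:Bounded}.

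Theorem \ref{thm:CstrSatisfaction} then yields, for every $i\in\{1,\ldots,H\}$,
\begin{equation*}
\| g_k(z_k^i, w_k^i, d^i) - g_k(z_k^*, w_k^*, d^*) \|_2^2 \,\leq\, L^2\bigl(\epsilon_w^{(i)} + \epsilon_d^{(i)} + 4\varsigma_z^2\bigr).
\end{equation*}
Since the right-hand side of \eqref{eq:UpperBound} is monotonically non-decreasing in both $\epsilon_w^*$ and $\epsilon_d^*$, enlarging these to $\epsilon_w^*=\max_{i}\|\mathbf{w}^i-\mathbf{w}^*\|_2^2$ and $\epsilon_d^*=\max_{i}\|d^i-d^*\|_2^2$ preserves each of the $H$ inequalities, which gives precisely the claimed bound applied scenario-agnostically.

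The main subtlety to handle carefully is the choice of the Lipschitz constant $L$: Theorem \ref{thm:CstrSatisfaction} provides a \emph{local} Lipschitz constant for a particular pair, so to collapse the $H$ inequalities into a single expression one needs a constant that is uniformly valid across all $H+1$ scenarios. Continuous differentiability of $g_k$ on the compact set $\mathbb{F}\times\mathbb{W}^N\times\mathbb{D}$, together with Assumption \ref{ass:Bounded} that confines every trajectory $z_k^i$ and $z_k^*$ to a common bounded region, supplies such a uniform $L$. Everything else is straightforward bookkeeping. I would also note that using $\min_i$ instead of $\max_i$ would yield a strictly tighter bound; the statement opts for the maximum to obtain a single scenario-agnostic certificate that does not require identifying the closest stored scenario to $(\mathbf{w}^*,d^*)$.
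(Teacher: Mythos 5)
Your proposal is correct and follows essentially the same route as the paper's own proof: apply Theorem~\ref{thm:CstrSatisfaction} to each pair $(\mathbf{w}^i,d^i)$, $(\mathbf{w}^*,d^*)$ with $\epsilon_{w,i}^*=\|\mathbf{w}^i-\mathbf{w}^*\|_2^2$ and $\epsilon_{d,i}^*=\|d^i-d^*\|_2^2$, then pass to the maxima over $i$ using monotonicity of the bound. Your added remarks on obtaining a uniform Lipschitz constant over the compact set and on the max-versus-min choice are reasonable refinements the paper leaves implicit, but they do not change the argument.
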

\begin{proof}
    From Theorem \ref{thm:CstrSatisfaction} we have that for all $i=1,\ldots,H$:
\begin{equation}
    \| g_k(z_k^i, w^i_{k},d^i)-g_k(z_k^*, w^*_{k},d^*) \|^2_2\leq L^2\left(\epsilon^*_{w,i}+\epsilon^*_{d,i}+4\varsigma_z^2\right)
\end{equation}
where we assumed that $\|\textbf{w}^i-\textbf{w}^*\|^2_2\leq \epsilon^*_{w,i}$ and $\|d^i-d^*\|^2_2\leq \epsilon^*_{d,i}$.  Thus, we have:
\begin{equation}
\begin{aligned}
    \| g_k(z_k^i, w^i_{k},d^i)-g_k(z_k^*, w^*_{k},d^*) \|^2_2\leq &{}\max_{i=1,\ldots,H}L^2\left(\epsilon^*_{w,i}+\epsilon^*_{d,i}+4\varsigma_z^2\right)\\
    =&{} L^2\left(\max_{i=1,\ldots,H}\epsilon^*_{w,i}+\max_{i=1,\ldots,H}\epsilon^*_{d,i}+4\varsigma_z^2\right)
    \end{aligned}
\end{equation}
Taking in Theorem \ref{thm:CstrSatisfaction} 
\[
\epsilon_w^*=\max_{i=1,\ldots,H}\epsilon^*_{w,i}=\max_{i=1,\ldots,H}\|\mathbf{w}^i-\mathbf{w}^*\|^2_2
\] and 
\[
\epsilon_d^*=\max_{i=1,\ldots,H}\epsilon^*_{d,i}=\max_{i=1\,\ldots,H}\|d^i-d^*\|^2_2
\]
concludes the proof.
\end{proof}

\subsubsection{Impact of number of scenarios}

The bounds obtained in Theorem \ref{thm:MultipleScenarios} allow inference about constraint violation in a practical implementation of Algorithm \ref{alg:LocalReductionInexact}. In particular, we will now analyse how fixing the number of scenarios $H$ affects the constraint violation. For simplicity, we focus on the constraint from \eqref{eq:PaugIneq} but an analogous reasoning can be done for \eqref{eq:CstrObjRef}. Let us assume that the function $g_k$ from \eqref{eq:PaugIneq} is such that:
\begin{equation}
 g_k:\R^{n_x}\times\R^{n_u}\times\R^{n_w}\times\R^{n_d}\rightarrow \mathbb{Q}_k\subset\R^{n_g} 
 \label{eq:ConstraintLeb}
\end{equation}
where $\mathbb{Q}_k$ is Lebesgue-measurable. The Lebesgue measure of the set will be called its \textit{n-dimensional volume}\cite[Ch. 21]{Handbook_Schechter1997} and denoted $\text{Vol}\;\mathbb{Q}_k =\Omega_k$. We assume that the values of $g_k$ are from a uniform distribution over $\mathbb{Q_k}$ for any realization of uncertainty. Let us also introduce for every scenario $(\mathbf{w}^i,d^i)\in \mathbb{H}^*$ the following subsets:
\begin{equation}
    \mathbb{S}_{k,\delta}^i:=\lbrace \overline{g}_k\in\mathbb{Q}_k:\|\overline{g}_k-g_k(z_k^i, w^i_{k},d^i)\|_2\leq \delta \rbrace
    \label{eq:nBall}
\end{equation}
where $\delta\geq 0$ is a constant such that $\mathbb{S}_{k,\delta}^i\subseteq\mathbb{Q}_k$. Then we can introduce
\begin{equation}
    \mathbb{S}_{k,\delta}:=\bigcup\limits_i^{H_q}  \mathbb{S}_{k,\delta}^i
    \label{eq:BigS}
\end{equation}
such that $\mathbb{S}_{k,\delta}^i\neq \mathbb{S}_{k,\delta}^j$ for $i,j=1,\ldots,H_q$, $i\neq j$. The set from \eqref{eq:BigS} collects $H_q$ sets $\mathbb{S}_{k,\delta}^i$ corresponding to distinct values of constraints $g_k(z_k^i, w^i_{k},d^i)$ obtained from scenarios $(w^i_{k},d^i)$ and trajectories $z_k^i$. We have that $H_{\delta}\leq H$, with strict inequality if at least two different realizations of scenarios $(w^{i}_{k},d^{i})$, $(w^{j}_{k},d^{j})$ and the corresponding trajectories $z_k^{i}$, $z_k^{j}$, $i\neq j$, $i,j=1,\ldots,H$, lead to the same  value of the constraint, i.e. $g_k(z_k^{i}, w^{i}_{k},d^{i})=g_k(z_k^{j}, w^{j}_{k},d^{j})$. 
The n-dimensional volume of $\mathbb{S}_{k,\delta}$ is now:
% \begin{equation}
%     \text{Vol}\; \mathbb{S}_{k,\delta} = \text{Vol} \left(\bigcup\limits_i  \mathbb{S}_{k,\delta}^i\setminus \bigcap\limits_i  \mathbb{S}_{k,\delta}^i\right)
%     \label{eq:SetDiff}
% \end{equation}
\begin{equation}
    \text{Vol}\; \mathbb{S}_{k,\delta} = \text{Vol} \;\bigcup\limits_i^{H_q}  \mathbb{S}_{k,\delta}^i
    \label{eq:SetDiff}
\end{equation}
From countable subadditivity of Lebesgue measure\cite{Real_Royden2010}, we get $\text{Vol}\bigcup\limits_i  \mathbb{S}_{k,\delta}^i\leq \sum\limits_i  \text{Vol}\;\mathbb{S}_{k,\delta}^i$, and then: 
\begin{equation}
    \text{Vol}\; \mathbb{S}_{k,\delta}^1\leq \text{Vol}\; \mathbb{S}_{k,\delta} \leq \sum\limits_i^{H_q}  \text{Vol}\;\mathbb{S}_{k,\delta}^i
    \label{eq:BoundedVolume}
\end{equation}
Noticing that $\mathbb{S}_{k,\delta}^i$ is a ball in $\R^{n_g}$ with radius $\delta$, we have\cite[p. 135]{Introduction_Sommerville2020}:
\begin{equation}
    \text{Vol}\; \mathbb{S}^i_{k,\delta}= \frac{\pi^{\frac{n_g}{2}}\delta^{n_g}}{\Gamma\left(\frac{n_g}{2}+1\right)}
\end{equation}
where $\Gamma$ is the gamma function, and then:
\begin{equation}\label{eq:volum_rel}
        \frac{\pi^{\frac{n_g}{2}}\delta^{n_g}}{\Gamma\left(\frac{n_g}{2}+1\right)} \leq \text{Vol}\;\mathbb{S}_{k,\delta} \leq \frac{H_{\delta}\pi^{\frac{n_g}{2}}\delta^{n_g}}{\Gamma\left(\frac{n_g}{2}+1\right)}.
\end{equation}
    We can now state:
\begin{thm}
    \label{thm:DependenceonH}
    Let us assume that Algorithm \ref{alg:LocalReductionInexact} is used to solve \eqref{eq:Plifted} where $g_k$ satisfies \eqref{eq:ConstraintLeb} for all $k=0,\ldots,N-1$ with $\text{Vol}\; \mathbb{Q}_k=\Omega_k$, and the values of $g_k$ are uniformly distributed over $\mathbb{Q}_k$. Let us assume that Algorithm \ref{alg:LocalReductionInexact} finished with scenario set $\mathbb{H}^*$, $\text{card }\mathbb{H}^*=H$. Let us assume that a threshold $\delta\in\mathbb{R}$ has been chosen so that the sets $\mathbb{S}_{k,\delta}^i$ from \eqref{eq:nBall} satisfy $\mathbb{S}_{k,\delta}^i\subset \mathbb{Q}_k$ for all $k=0,\ldots,N-1$, $i=1,\ldots,H_q$. The probability that the constraint violation exceeds the threshold $\delta$ for scenario $(\mathbf{w}^*,d^*)\notin \mathbb{H}^*$such that $g_k(z_k^*, w^*_{k},d^*)=\overline{g}^*_k$ is bounded as follows:
    % \begin{equation}
    %     1-\frac{\pi^{\frac{n_g}{2}}\delta^{n_g}}{\Omega_k\Gamma\left(\frac{n_g}{2}+1\right)}\underset{\raisebox{.5pt}{\textcircled{\raisebox{-.9pt} {1}}}}{\geq} P\left(\forall i=1,\ldots,H:\|\overline{g}^*_k-g_k(z_k^i, w^i_{k},d^i)\|_2 > \delta\right)\underset{\raisebox{.5pt}{\textcircled{\raisebox{-.9pt} {2}}}}{\geq} 1-\frac{H_{\delta}\pi^{\frac{n_g}{2}}\delta^{n_g}}{\Omega_k\Gamma\left(\frac{n_g}{2}+1\right)}
    % \end{equation}
   \begin{equation}\label{prob_rel}    
 1-\frac{\pi^{\frac{n_g}{2}}\delta^{n_g}}{\Omega_k\Gamma\left(\frac{n_g}{2}+1\right)}\underset{\raisebox{.5pt}{\textcircled{\raisebox{-.9pt} {1}}}}{\geq} P\left( \overline{g}^*_k \in \mathbb{S}^c_{k,\delta}
 \right)\underset{\raisebox{.5pt}{\textcircled{\raisebox{-.9pt} {2}}}}{\geq} 1-\frac{H_{\delta}\pi^{\frac{n_g}{2}}\delta^{n_g}}{\Omega_k\Gamma\left(\frac{n_g}{2}+1\right)}
\end{equation}
 where $\mathbb{S}^c_{k,\delta}$ is the complement of the set $\mathbb{S}_{k,\delta}$, i.e. $\mathbb{S}^c_{k,\delta}:=\mathbb{Q}_k\setminus \mathbb{S}_{k,\delta}$.   
\end{thm}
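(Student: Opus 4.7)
The plan is to compute the probability directly from the uniform distribution assumption and then substitute the volume bounds derived in \eqref{eq:volum_rel}. Because the values of $g_k(\cdot,\cdot,\cdot,\cdot)$ are uniformly distributed over $\mathbb{Q}_k$ for any realization of uncertainty, the probability that $\overline{g}_k^*$ lies in any Lebesgue-measurable subset $\mathbb{A}\subseteq\mathbb{Q}_k$ equals $\text{Vol}\,\mathbb{A}/\Omega_k$. I would first apply this directly to $\mathbb{A}=\mathbb{S}^c_{k,\delta}=\mathbb{Q}_k\setminus\mathbb{S}_{k,\delta}$, noting that $\mathbb{S}_{k,\delta}\subseteq\mathbb{Q}_k$ by the hypothesis $\mathbb{S}^i_{k,\delta}\subset\mathbb{Q}_k$ together with \eqref{eq:BigS}, so that
\begin{equation}
P\!\left(\overline{g}^*_k\in\mathbb{S}^c_{k,\delta}\right)=\frac{\text{Vol}\,\mathbb{S}^c_{k,\delta}}{\Omega_k}=1-\frac{\text{Vol}\,\mathbb{S}_{k,\delta}}{\Omega_k}.
\end{equation}

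Next I would invoke the two-sided volume estimate \eqref{eq:volum_rel} already established before the theorem statement, which bounds $\text{Vol}\,\mathbb{S}_{k,\delta}$ between the volume of a single $n_g$-ball of radius $\delta$ and $H_\delta$ times that volume. Dividing through by $\Omega_k$ and subtracting from $1$ reverses the inequalities and yields exactly the two bounds $\raisebox{.5pt}{\textcircled{\raisebox{-.9pt}{1}}}$ and $\raisebox{.5pt}{\textcircled{\raisebox{-.9pt}{2}}}$ in \eqref{prob_rel}.

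The only substantive point to justify carefully is the passage from the uniform-distribution hypothesis on $g_k$ to the identity $P(\overline{g}_k^*\in\mathbb{A})=\text{Vol}\,\mathbb{A}/\Omega_k$ when $\mathbb{A}=\mathbb{S}^c_{k,\delta}$. This requires $\mathbb{S}_{k,\delta}$ to be Lebesgue measurable, which is immediate since it is a finite union of closed balls in $\R^{n_g}$, and it requires the assumption $\mathbb{S}^i_{k,\delta}\subset\mathbb{Q}_k$ so that no probability mass is assigned outside $\mathbb{Q}_k$ (ensuring the complement is taken within the support of the distribution).

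I do not anticipate a hard technical obstacle: once \eqref{eq:volum_rel} is in hand and the uniform-distribution hypothesis is invoked, the result is a one-line algebraic manipulation. The only mildly delicate aspect is being consistent about $H_\delta$ versus $H$, since distinct scenarios in $\mathbb{H}^*$ may produce coinciding constraint values and hence fewer than $H$ distinct balls, but this is precisely why $H_\delta$ (rather than $H$) appears in the upper bound through countable subadditivity of the Lebesgue measure, as already used in the derivation of \eqref{eq:BoundedVolume}.
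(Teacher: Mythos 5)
Your proposal is correct and follows essentially the same route as the paper's proof: both reduce the claim to the identity $P(\overline{g}^*_k\in\mathbb{S}_{k,\delta})=\mathrm{Vol}\,\mathbb{S}_{k,\delta}/\Omega_k$ from the uniform-distribution hypothesis, divide the volume bounds in \eqref{eq:volum_rel} by $\Omega_k$, and pass to the complement. Your added remarks on measurability, the containment $\mathbb{S}_{k,\delta}\subseteq\mathbb{Q}_k$, and the role of $H_\delta$ versus $H$ merely make explicit what the paper leaves implicit.
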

\begin{proof}
Dividing by $\Omega_k>0$ all the terms in \eqref{eq:volum_rel} we obtain 
\begin{equation}\label{prob_rela}
\frac{\pi^{\frac{n_g}{2}}\delta^{n_g}}{\Omega_k\Gamma\left(\frac{n_g}{2}+1\right)} \leq 
P\left( \overline{g}^*_k \in \mathbb{S}_{k,\delta}
\right)
\leq \frac{H_{\delta}\pi^{\frac{n_g}{2}}\delta^{n_g}}{\Omega_k\Gamma\left(\frac{n_g}{2}+1\right)}
\end{equation}
because $P\left( \overline{g}^*_k \in \mathbb{S}_{k,\delta}\right) := \frac{\text{Vol}\;\mathbb{S}_{k,\delta} }{\Omega_k} $. Substituting $P\left( \overline{g}^*_k \in \mathbb{S}_{k,\delta}
\right)=1-P\left( \overline{g}^*_k \in \mathbb{S}^c_{k,\delta} \right)$ into \eqref{prob_rela} and rearranging, we obtain \eqref{prob_rel}.
\end{proof}

\begin{figure}[tbp]
\centering
\includegraphics[width=0.75\textwidth]{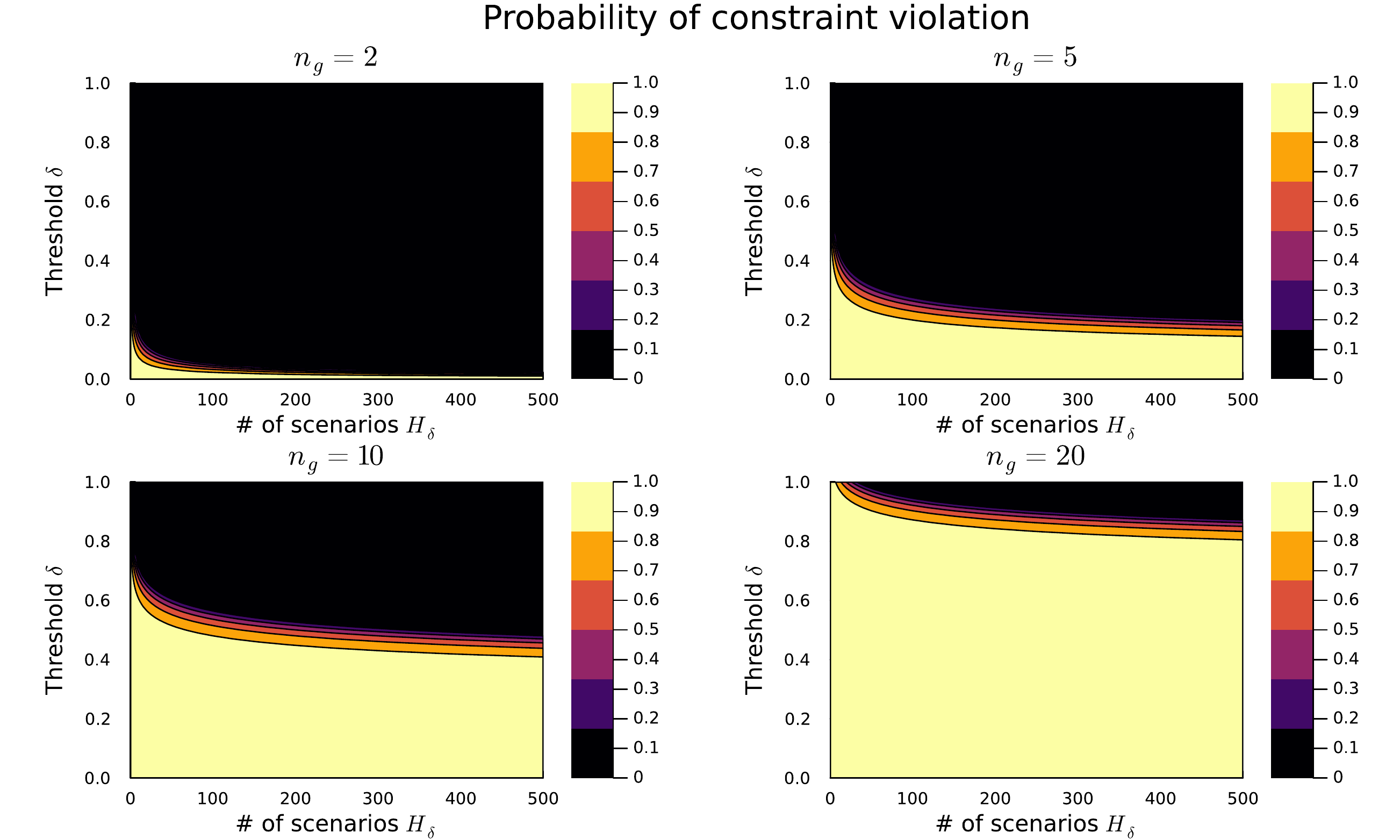}
\caption{Trade-off between the number of scenarios $H_{\delta}$ and the desired threshold $\delta$ as a function of dimensionality $n_g$ of the uncertainty space}
\label{fig:ThresholdTradeOff}
\end{figure}

Theorem \ref{thm:DependenceonH} combines the number of scenarios $H_{\delta}$ obtained from local reduction with the threshold for constraint violation $\delta$. Assuming that $g$ has been normalised so that $\Omega_k=1$, we rewrite \eqref{prob_rel} as:
\begin{equation}
\label{eq:FinalInequalityRewritten}
      P\left( \overline{g}^*_k \in \mathbb{S}^c_{k,\delta} \right)\geq 1-\kappa H_{\delta}\delta^{n_g}
\end{equation}
where $\kappa:=\frac{\pi^{\frac{n_g}{2}}}{\Gamma\left(\frac{n_g}{2}+1\right)}$ is a constant depending on the number of constraints $n_g$. We see in \eqref{eq:FinalInequalityRewritten} that there is a trade-off between the threshold $\delta$ and the number of scenarios $H_{\delta}$. For a given threshold $\delta$, increasing the number of scenarios will decrease the probability of violation. Conversely, if the number of scenarios is constant, a small threshold may be exceeded with high probability. An example of the trade-off is shown in Fig.\ \ref{fig:ThresholdTradeOff} as a function of the dimensions $n_g$.

Finally, Theorem \ref{thm:DependenceonH} has a practical interpretation. Assuming that the allowed probability of constraint violation is $p_{\text{des}}$, from \eqref{eq:FinalInequalityRewritten} we obtain a bound on the number of scenarios $H_{\delta}$ to satisfy this requirement for a given $\delta$:
\begin{equation}
    H_{\delta}\geq \frac{1-p_{\text{des}}}{\kappa\delta^{n_g}}. 
    \label{eq:MinScenarios}
\end{equation}
The number of scenarios obtained from \eqref{eq:MinScenarios} can be used as an additional stopping criterion (line 18 in Algorithm \ref{alg:LocalReductionInexact}).

Finally, should the probability distributions of the uncertainty be available, our approach can be extended to explicitly include chance constraints and probability distributions a priori in the problem formulation by using an appropriate problem structure\cite{Theory_Bertsimas2011}.

\section{Examples}
\label{sec:NumericalResults}
We will first show that the realisation of uncertainty leading to the maximal constraint violation can be anywhere, not necessarily on the boundary of the uncertainty set even for a linear system. We also show the performance of the local reduction applied to an unstable system.

We then show that the local reduction method described in Section \ref{sec:LocalReduction} finds scenarios from inside the uncertainty sets and provides robust solutions to optimal control problems with uncertainty in two numerical examples: temperature control in a residential building and flow control in a centrifugal compressor. The solution provided by local reduction is then compared with the case obtained for boundary scenarios and for scenarios chosen randomly from a uniform distribution. 

The examples were implemented in Julia 1.6 \cite{Julia_Bezanson2017} using JuMP 0.21.4 \cite{JuMP_Dunning2017}. The problems were then solved using Ipopt version 3.12.10 \cite{Exact_Thierry2020}. All tests were performed on an Intel\textsuperscript{\textregistered} Core\textsuperscript{\texttrademark} i7-7500U with 16\,GB of RAM.

\subsection{Scenarios not on the boundary}
\label{prop:NotBoundary}
An example of scenarios not on the boundary for a nonlinear system was provided in previous works\cite{Interval_Krasnochtanova2010,Robust_Puschke2018}. We show that a linear system with parametric uncertainty may have interim worst-case scenario in the interior of the uncertainty range. The worst-case scenario in the sense of Definition \ref{def:WorstCase} for a robust optimal control problem of the form \eqref{eq:Prevised} may be in the interior of the set $\mathbb{W}\times\mathbb{D}$. 

Let us assume that we have a system with dynamics affected by parametric uncertainty $d$:
\begin{equation}
x_{k+1}=(A+d)x_k+Bu_k
\label{eq:UncertainDynamics}
\end{equation}
where $k=1,\ldots,N$ and $d\in[\underline{d},\overline{d}]$, $A$, $B$ are constant scalar matrices, $x_0$ is known. Let us assume further that the optimal control problem includes a constraint of the form:
\begin{equation}
x_k\leq 0
\label{eq:SimpleCstr}
\end{equation}
The constraint \eqref{eq:SimpleCstr} must be satisfied for all $k$. The maximisation step in the local reduction method consists in solving a series of optimisation problems with the objective for every~$k$:
\begin{align}
    \max_{d} \quad x_k\\
\text{ subject to }     \eqref{eq:UncertainDynamics}
\label{eq:MaxOptTest} 
\end{align}
Every $k$ corresponds to a different optimisation problem of the form \eqref{eq:MaxOptTest}. 

Let us now take $k=4$, $x_0=0$, $A=-0.5$, $B=1$, and we are looking for the maximal constraint violation for a constant $d\in[-0.5,0.5]$. Let us assume that the current optimal control input $u_k$, $k=0,\ldots,4$ is $u_0=u_2=u_3=-1$ and $u_1=u_4=1$.  The maximisation problem from \eqref{eq:MaxOptTest} becomes a maximisation of a fourth order polynomial of $d$:
\begin{equation}
\max_{d}\sum\limits_{j=0}^4u_{4-j}(-0.5+d)^j
\label{eq:SpecialPoly}
\end{equation}
Looking for the maximum of the polynomial \eqref{eq:SpecialPoly} yields $d\approx 0.2$ which is not on the boundary of the interval $[-0.5,0.5]$.

This example confirms that considering boundary scenarios would miss the actual worst-case scenario. In a similar way, selecting a priori a number of scenarios would result in adding unnecessary scenarios that may or may not be the worst-case scenario. 
\subsection{Unstable system}
To show that the boundedness of constraints required in Theorem \ref{thm:Convergence} need not imply stability, we analysed a system with dynamics:
\begin{equation}
    x_{k+1}=adx_{k}+u_k
    \label{eq:Unstable}
\end{equation}
where $a=2.1$, the uncertainty $d\in[0.9,1.1]$, and with $x_1=0.5$. The constraints to be satisfied were $0\leq x_k\leq 1$, for all $k=1,\ldots,N$ with $N=10$. The controller $u_k$ was parameterised as an affine function of the state:
\begin{equation}
    u_k=Kx_k+q_k
\end{equation}
and the constraints $u_k\in[-1,1]$ were enforced by a smooth saturation function
\begin{equation}
u^{\text{sat}}_k=\frac{\beta_0}{\beta_1+\exp(\beta_2u_k)}+\beta_3
\end{equation}
where $\beta_i$ are constants. Here $\beta_0=-2.0229$, $\beta_1=1$, $\beta_2=1.2963$, $\beta_3=1.01145$. The objective was to minimise the square of the control over the whole horizon $N$:
\begin{equation}
J=\sum\limits_{k=0}^{N-1}u_k^2.
\end{equation}

The trajectory obtained from maximization of the violation of the constraint $x_{10}\leq 1$ is shown in Fig. \ref{fig:TrajectoryViolation}. The time horizon is finite, $N=10$, so the trajectory $\textbf{x}$ is bounded for any value of $d$ and thus the local reduction can be used. Using the local reduction algorithm resulted in three scenarios: $d_1=1$, $d_2=0.9$, $d_3=1.1$ that ensure robustness, as indicated in Fig. \ref{fig:UnstablePlot3}. The trajectories in Fig. \ref{fig:UnstablePlot3} were obtained for 500 randomly chosen scenarios uniformly distributed in $[0.9,1.1]$. 

\begin{figure}
     \centering
     \begin{subfigure}[b]{0.47\textwidth}
         \centering
         \includegraphics[width=\textwidth]{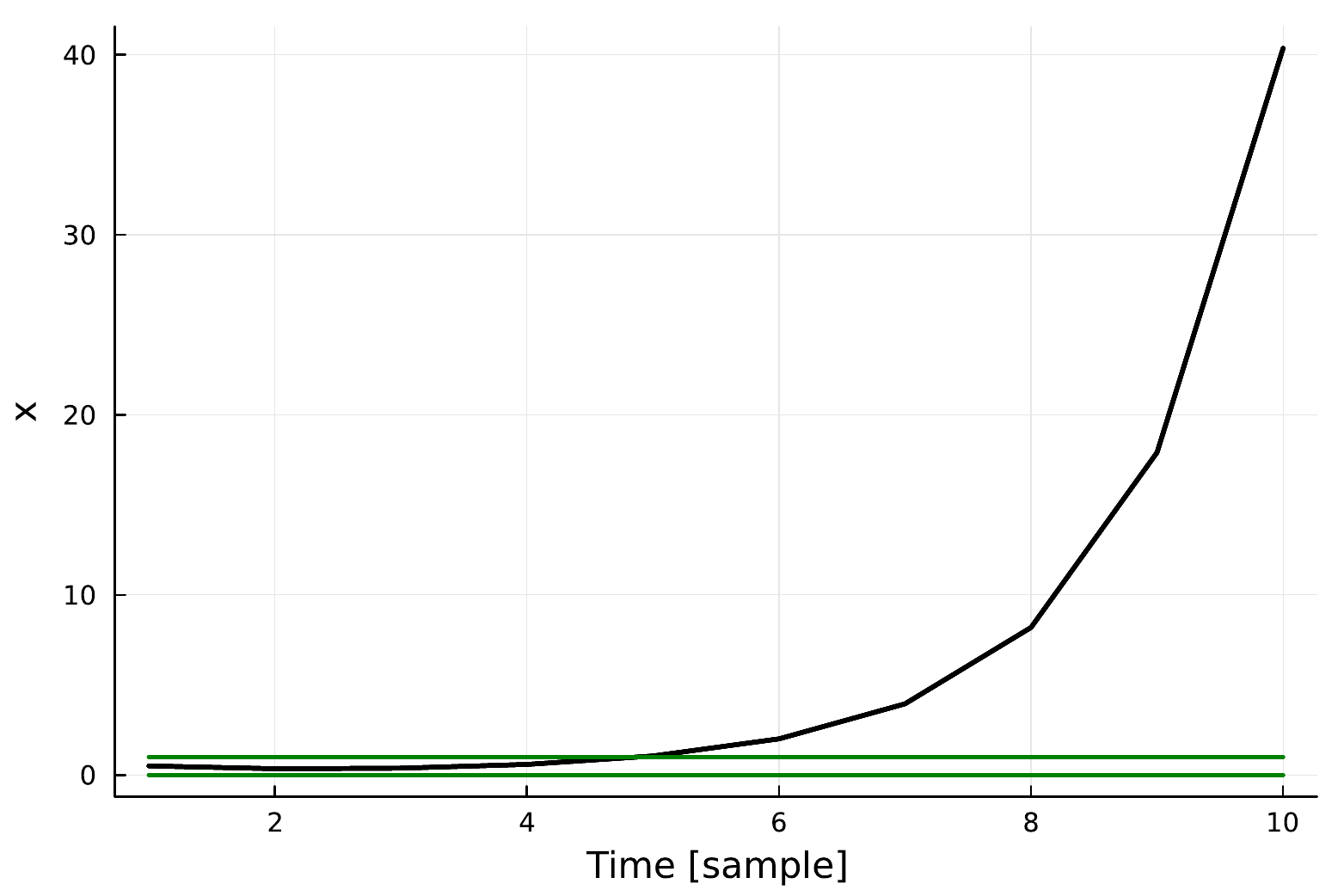}
         \caption{Trajectory $x$ corresponding to maximization of upper bound (black) with the respective bounds (green)}
         \label{fig:TrajectoryViolation}
     \end{subfigure}
     \hfill
     \begin{subfigure}[b]{0.47\textwidth}
         \centering
         \includegraphics[width=\textwidth]{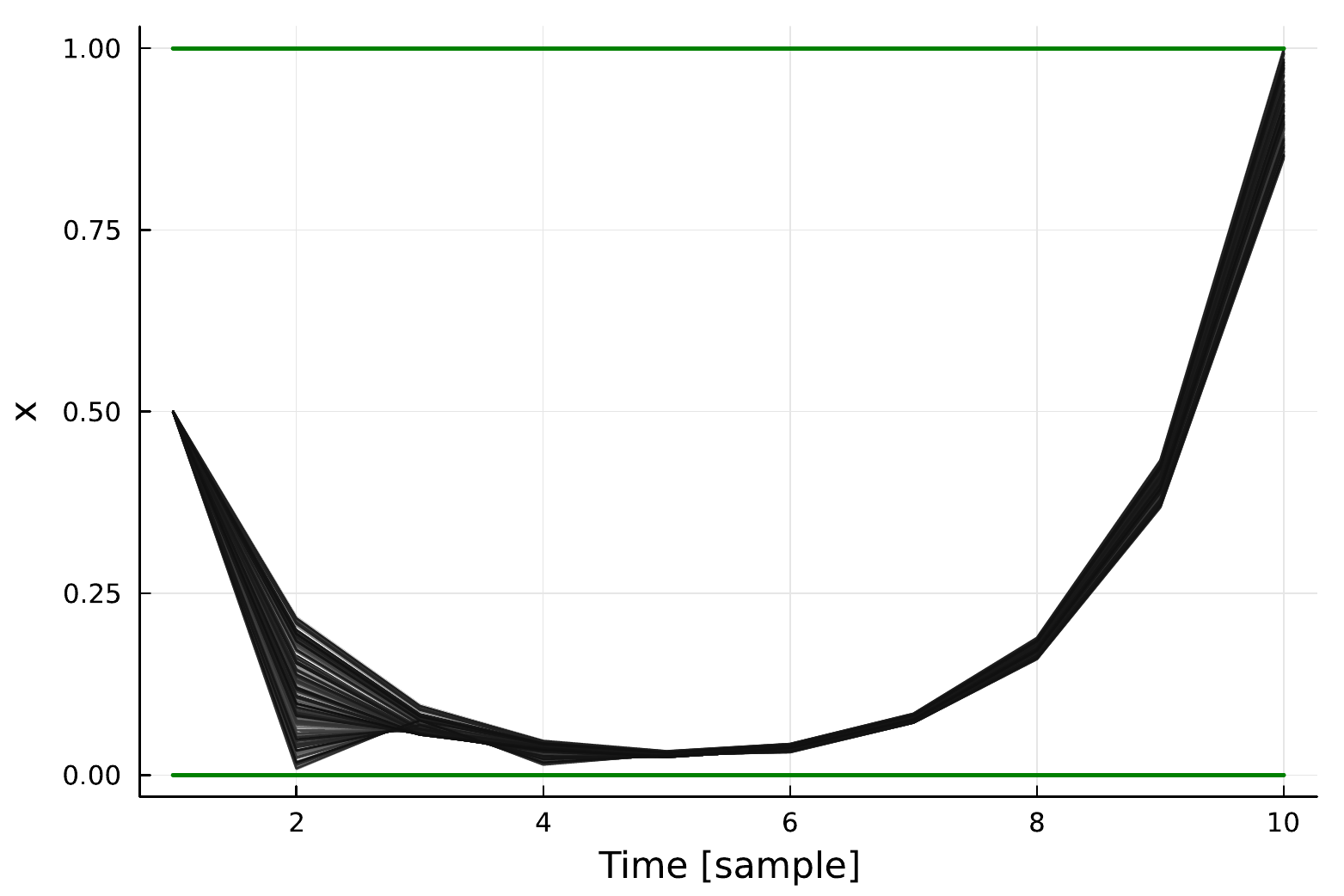}
         \caption{Validation of the scenarios from local reduction for 500 randomly chosen scenarios uniformly distributed in $[0.9,1.1]$}
         \label{fig:UnstablePlot3}
     \end{subfigure}
% \\
% \begin{subfigure}[b]{0.45\textwidth}         \centering
%          \includegraphics[width=\textwidth]{Figures/UnstableController}
%          \caption{The controller $u_k$ (black) and the saturated controller $u_k^{\text{sat}}$ (red) corresponding to the validation scenarios}
%          \label{fig:UnstableController}
%      \end{subfigure}
        \caption{Application of local reduction for the unstable system \eqref{eq:Unstable}}
        \label{fig:UnstableSystem}
\end{figure}

The example shows that stability of the dynamics is optional, provided the constraints are bounded. Nevertheless, it is recommended to take instability into account when  numerically solving the miximization problems in Algorithm \ref{alg:LocalReductionMaximisation}. To show the influence of instability on the numerical performance, we computed the violation for six different values of the horizon $N$ and the results are collected in Table \ref{tbl:Violations}. For instance, the maximization problems can be solved approximately, terminating as soon as a violation has been found. Such approaches correspond to choosing large values of $\epsilon_w$ and $\epsilon_d$ in Algorithm \ref{alg:LocalReductionInexact}, so the analysis in Section \ref{sec:SimilarityScenarios} holds.

\begin{table}[!tbp]
\centering
\caption{Constraint violation obtained for the unstable system \eqref{eq:Unstable} with different time horizon $N$}
\begin{tabular}{@{}l|llllll@{}}
$N$         & 5    & 10    & 15      & 20        & 25        & 30        \\ \midrule
Violation & 3.75 & 39.37 & 49829.0 & 2.63163e6 & 2.00637e8 & 1.35451e9
\end{tabular}
\label{tbl:Violations}
\end{table}

\subsection{Linear system with parametric uncertainty}
This numerical example consists in a linear system with both parametric and additive time-varying uncertainty. The example describes a single zone building affected by time-varying internal heat gain, solar radiation, and external temperature\cite{System_Lian2021}. The objective is to follow a time-varying set-point for internal temperature $x_k^{\text{temp}}$. The dynamics are discrete and linear:
\begin{equation}
x_{k+1}=Ax_k+Bu_k^{\text{sat}}+Ww_k
\label{eq:BuildingSimple}
\end{equation}
with matrices:
\begin{center}
    $A=\begin{bmatrix}
    0.8511 & 0.0541&  0.0707\\
    0.1293 & 0.8635 & 0.0055\\
    0.0989  & 0.0032 & 0.7541
\end{bmatrix}$, $B=10^{-3}\begin{bmatrix}
    3.5\\
    0.3\\
    0.2
\end{bmatrix}$, $W=10^{-3}\begin{bmatrix}
    22.217& 1.7912 &42.2123\\
    1.5376 &0.6944& 2.29214\\
    103.1813& 0.1032& 196.0444
\end{bmatrix}$.
\end{center}
The states $x$ describe the indoor temperature $x^{\text{temp}}$, wall temperature $x^{\text{wall}}$, and the corridor temperature $x^{\text{corr}}$. The control $u$ represents the amount of heating and cooling delivered to the building. The initial condition was chosen as $x_0=\begin{bmatrix} 25.0& 24.0 &24.0 \end{bmatrix}^{\T}$$^{\circ}$C. Moreover, we assume that the wall temperature and the corridor temperature can only be measured approximately, so there are two additional sources of uncertainty in the initial condition for these two states. We assume $x_0^i=24+d^i$, $i=\text{wall}, \text{corr}$, where $d^i\in[-0.5,0.5]$. We also assume that the matrices $A=[a_{i,j}]$ and $B=[b_j]$, $i,j=1,2,3$ are affected by uncertainty:
\begin{center}
        $a_{i,j}\cdot\delta_{i,j}$ and $b_j\cdot\eta_j$
\end{center}
where $\delta_{i,j},\eta_j$ are uncertain parameters. Two cases will be considered: Case A with $\delta_{i,j},\eta_j\in[0.98,1.02]$ and Case B with $\delta_{i,j},\eta_j\in[0.96,1.03]$.
The minimal control effort is ensured by the objective function:
\begin{equation}
J=\frac{1}{N}\sum\limits_{k=0}^{N-1}u_k^2
\end{equation}

It is assumed that the day starts at 6.00$\,$am and lasts 12~hours. The temperature indoors must stay within limits:
\begin{equation}
T_{\min}\leq x_k^{\text{temp}}\leq T_{\max}
\label{eq:TempBounds}
\end{equation}
During the day, the indoor temperature must be kept above $23^\circ$$\,$C and during the night can drop down to $17^\circ$$\,$C:
\begin{equation}
T_{\min} = \begin{cases}
17^{\circ}$\,$\text{C during night time}\\
23^{\circ}$\,$\text{C during day time}
\end{cases}
\end{equation}
 The maximal temperature is the same during the day and night, $T_{\max}=26^{\circ}$$\,$C. 

\begin{table}[]
\centering
\caption{Ranges of uncertain parameters throughout the day}
\label{tbl:Uncertainties}
\begin{tabular}{@{}lll@{}}
\toprule
                     & Day              & Night            \\ \midrule
Internal heat gain   & [4,6]            & [0,2]            \\
Solar radiation      & [4,6]            & 0                \\
External temperature & [6,8]$^{\circ}$C & [2,4]$^{\circ}$C \\ \bottomrule
\end{tabular}
\end{table}

The optimal control problem is solved over a period of 48\,hours starting at 6.00$\,$am the first day, with $N=192$. As a result, the trajectory constraints \eqref{eq:TempBounds} impose $192\cdot 2$ constraints corresponding to every sampling time. The three uncertain parameters, internal gain, solar radiation, and external temperature, vary with time within the limits provided in Table \ref{tbl:Uncertainties}.

The control variables are parameterised as:
\begin{equation}
u_k=Kx_k^{\text{temp}}+q_k
\label{eq:ControlBuilding}
\end{equation}
where $K$ and $q_k$ are decision variables. Furthermore, we include saturation of the control inputs:
\begin{equation}
u^{\text{sat}}_k = \text{sat}(u_k) = \begin{cases}
-500\text{\,W}&{}\text{ for } u_k<-500\text{\,W}\\
u_k&{}\text{ for } -500\text{\,W}\leq u_k\leq1200\text{\,W}\\
1200\text{\,W}&{}\text{ for } u_k>1200\text{\,W}
\end{cases}
\end{equation}
The saturation was approximated by a smooth function:
\begin{equation}
u^{\text{sat}}_k=\frac{\beta_0}{\beta_1+\exp(\beta_2u_k)}+\beta_3
\label{eq:SmoothSaturation}
\end{equation}
where $\beta_i$ are constants. Here $\beta_0=-5030$, $\beta_1=2.937$, $\beta_2=0.003$, $\beta_3=1207$.

In total, there are 14 uncertain parameters affecting the matrices $A$, $B$, and the initial condition for the wall and corridor temperatures. We assume no knowledge about the scenarios, except the ranges of uncertainty. 

\subsubsection{Results - Case A}
In case A, we ran Algorithm \ref{alg:LocalReductionGeneral} with the time-varying uncertainties from Table \ref{tbl:Uncertainties} and parametric uncertainties $\delta_{i,j},\eta_j\in[0.98,1.02]$. 

\paragraph{Overall performance}
The local reduction method in Case A reduced the number of scenarios to five. The resulting controller obtained for the interim worst-case scenarios was then validated for 500 random realisations from a uniform distribution of uncertainty. The validation of the controller is shown in top left plot in Fig. \ref{fig:Comparison}. The black curves stay within the green bounds corresponding to constraints \eqref{eq:TempBounds}. The results suggest that local reduction was able to find a robust solution despite using a local solver for maximisations.

\begin{figure*}
     \centering
     \begin{subfigure}[b]{0.45\textwidth}
         \centering
         \includegraphics[width=\textwidth]{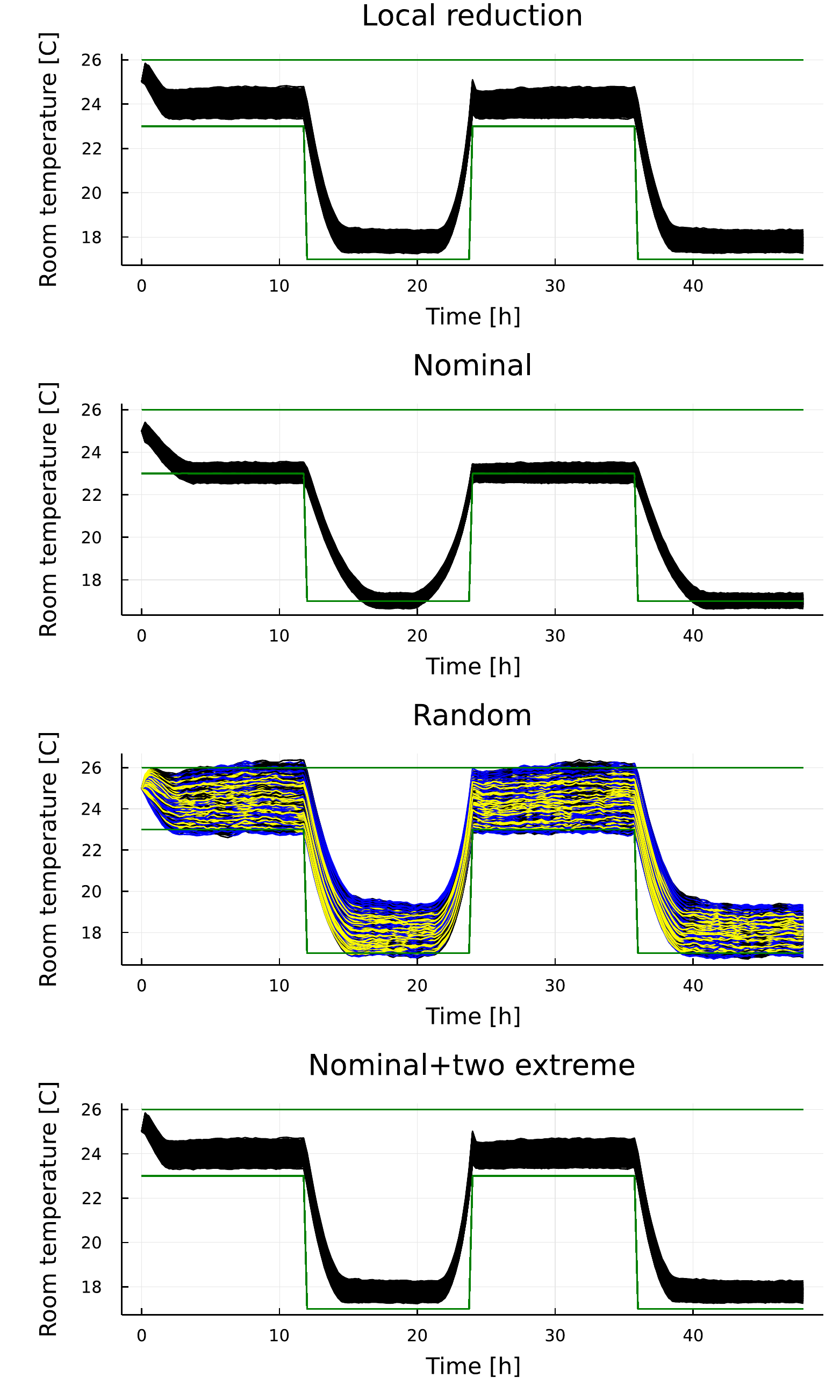}
         \caption{Case A}
         \label{fig:SmallUncertainty}
     \end{subfigure}
     \hfill
     \begin{subfigure}[b]{0.45\textwidth}
         \centering
         \includegraphics[width=\textwidth]{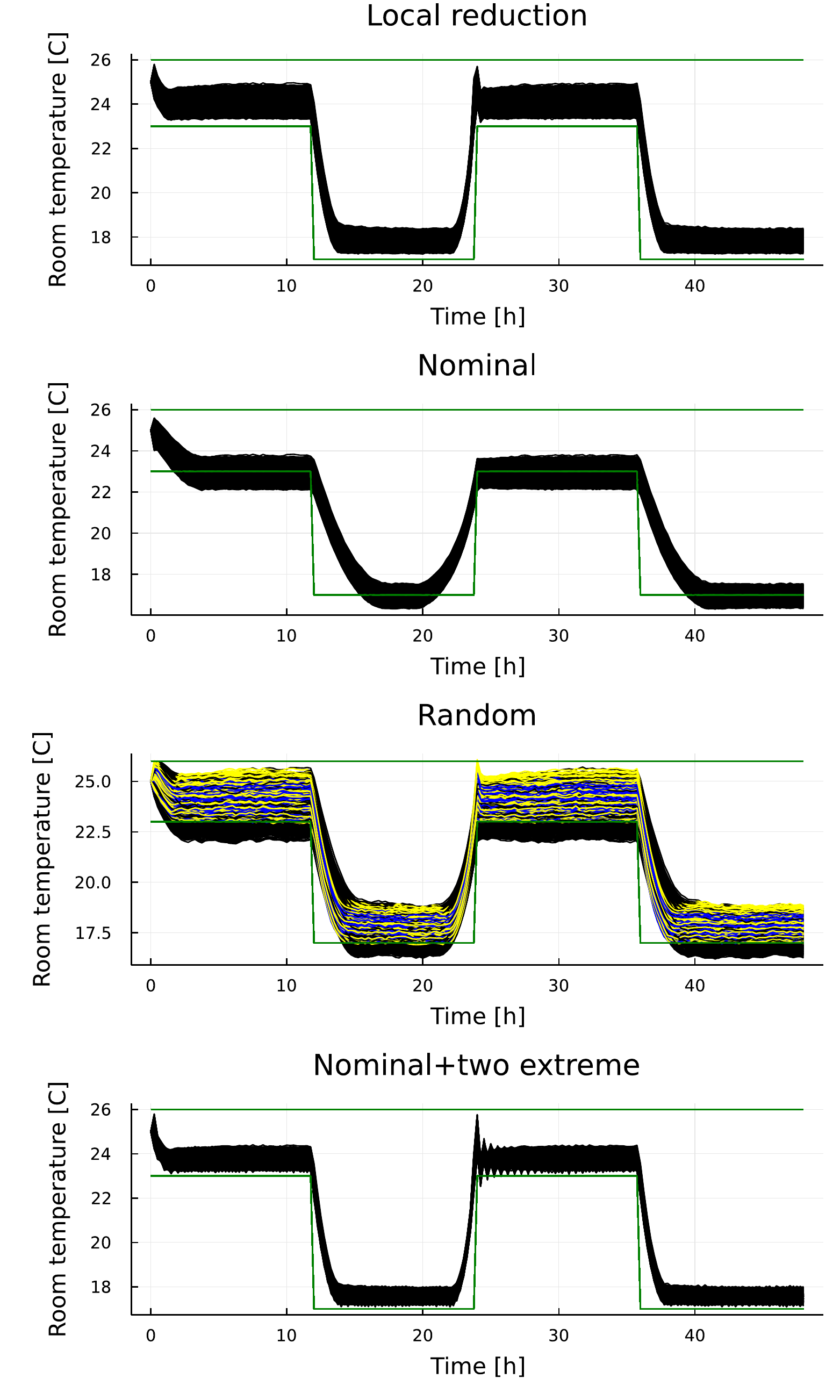}
         \caption{Case B}
         \label{fig:BigUncertainty}
     \end{subfigure}
        \caption{Comparison of local reduction with scenario based approaches in Case A (left) and Case B (right)}
        \label{fig:Comparison}
\end{figure*}

The results also indicate that the local reduction method handles time-varying uncertainty without specifying the scenarios over the whole time horizon. This is because there is no need to specify time-varying scenarios as they will be found in the maximisation step in Algorithm \ref{alg:LocalReductionMaximisation}. Moreover, Algorithm \ref{alg:LocalReductionMaximisation} treats time-varying uncertainty as one realisation over the whole horizon, thus overcoming the limitations of separate robust horizon\cite{Sensitivity_Thombre2021}. 

An example of an interim worst-case scenarios obtained in the maximisation step is shown in Fig. \ref{fig:NonBoundaryWorstCase}.

\begin{figure}[tbp]
\centering
\includegraphics[width=0.75\textwidth]{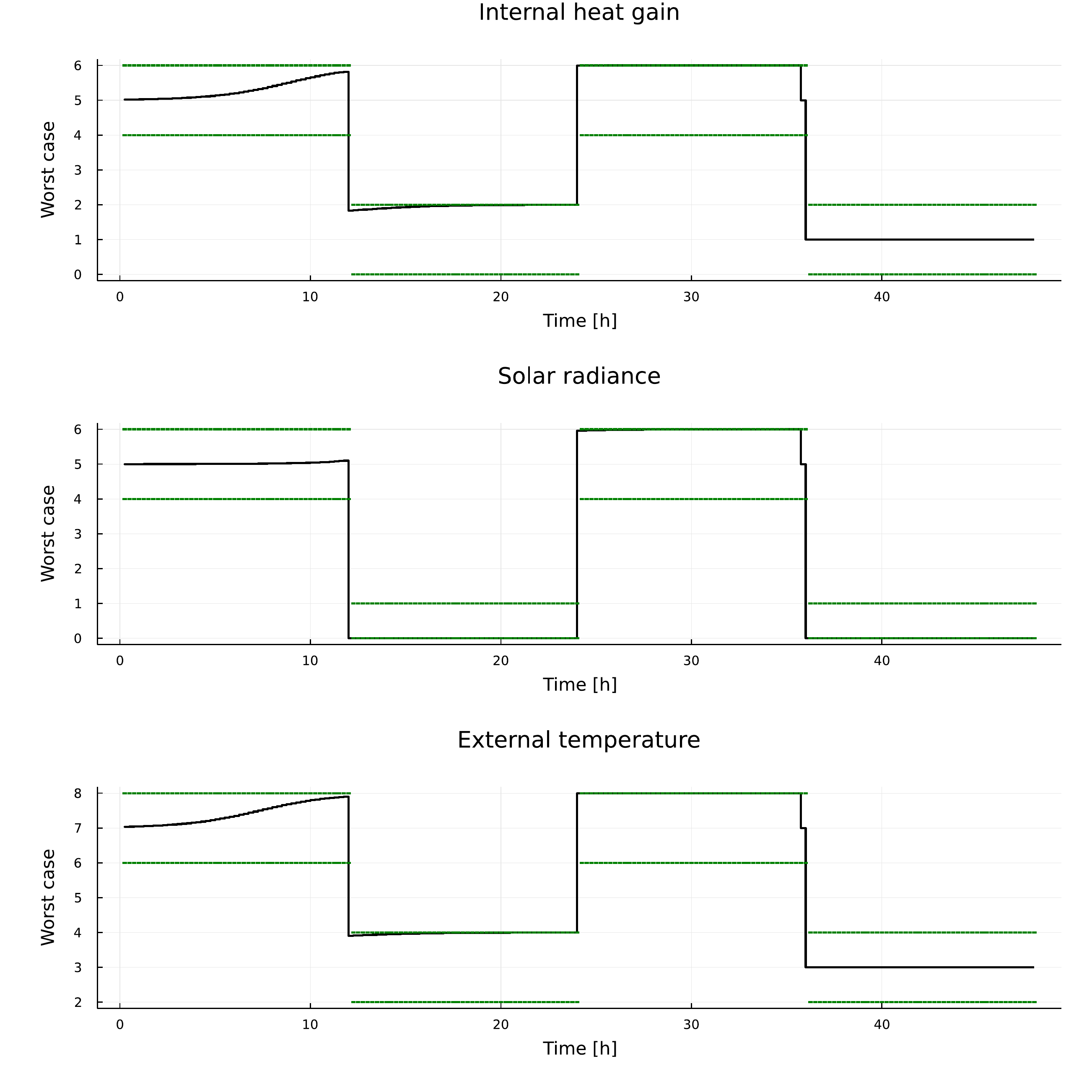}
\caption{An example of a time-varying scenario found by the local reduction in Case A}
\label{fig:NonBoundaryWorstCase}
\end{figure}

\paragraph{Comparison with other approaches}

The results obtained from local reduction are then compared with three scenario-based approaches from the literature \cite{scenario_Campi2021}:
\begin{itemize}
    \item Nominal approach, with a controller obtained assuming there is no uncertainty, i.e. $(\mathbf{w},d)=0$ (further denoted as ``Nominal'')
    \item Randomised approach, with a controller obtained for a number of randomly chosen scenarios (further denoted as ``Random'')
    \item Extreme approach, with a controller obtained for three scenarios: nominal, lower bound, and upper bound for all uncertainties\cite{Sensitivity_Thombre2021} (further denoted as ``Nominal+two extreme'')
\end{itemize}

Validating the nominal controller with 500 random scenarios shows that the approach based on nominal values leads to violation of constraints as shown in the plot `Nominal' in the left column of Fig. \ref{fig:Comparison}.

 The second set of controllers we used was derived using three sets of random scenarios: five scenarios because five scenarios were found in local reduction, 100 scenarios, and 250 scenarios. The results are shown in the plot `Random' in the left column of Fig. \ref{fig:Comparison}, with black corresponding to the controller obtained from five scenarios, yellow to the controller with 100 scenarios, and blue to the controller with 250 scenarios. In all the cases the controller violated at least one of the bounds (100 scenarios gave 0.2$^\circ$ $\,$C, 250 scenarios gave 0.1$^\circ$ $\,$C), with the controller based on five scenarios violating both the lower and upper bound (1.1$^\circ$C). Even though the violation decreased with increasing the number of scenarios, further increasing the number of random scenarios to 600 proved unsuccessful in avoiding the violation. Larger problems could not be solved on the computer.

A possible reason for the random controller being unable to satisfy the constraints is due to not including extreme scenarios in the scenario set. If we were to take only extreme values for every uncertainty and consider all extreme scenarios, we would need to solve a problem with $2^{14+3\times192}$ scenarios, which is intractable. To reduce the number of scenarios, we chose to use the nominal scenario, combined with two extreme scenarios. The extreme scenarios were taken as all uncertainties on their lower or upper bound simultaneously. The results of validating the controller for 500 scenarios are shown in the plot `Nominal+two extreme' in the left column of Fig. \ref{fig:Comparison}. The controller based on the extreme scenarios was also able to avoid constraint violations with three scenarios. 

The results of the comparison show that the local reduction method provides better results than the nominal control or a controller based on a random choice of scenarios. At the same time, the number of scenarios found in the maximisation step from Algorithm \ref{alg:LocalReductionMaximisation} is comparable to the controller based only on nominal and two extreme scenarios. Further analysis of the performance comparison will be done for Case B in Section \ref{sec:CaseB} to show that other controllers fail if parametric uncertainty is more significant.

\paragraph{Time performance}
The results from Fig. \ref{fig:Comparison} indicate that the local reduction method enables reducing the number of scenarios compared to approaches based on random choice or on time-varying extreme scenarios. Figure \ref{fig:TimingBuildings} shows in the left column the time necessary to solve each step of the local reduction method. The plot in the top left shows the time to solve the minimisation problem as a function of iterations of the local reduction. The iterations correspond to the number of scenarios included in the minimisation problem. The number of scenarios is relatively small (five scenarios), so the time for each iteration is below~3.5$\,$s. 

The second plot in the left column shows the time for solving the maximisation problem if the objective is considered (line 2 in Algorithm \ref{alg:LocalReductionMaximisation}). The time to solve the maximisation problem in every iteration is comparable to the time to solve the minimisation problem. The subsequent two plots show the time for the maximisation of the constraint violation corresponding to the lower and upper bounds over the overall time horizon (one maximisation per time step, lines 4-8 in Algorithm \ref{alg:LocalReductionMaximisation}). In both cases, the average time to solve a single maximisation problem was 0.35$\,$s. The total time for finding the five scenarios was 10$\,$min$\,$48$\,$s. The algorithm can be parallelised so that the maximisation problems are solved simultaneously\cite{parallel_Zakovic2003}. Therefore, it can be expected that the time to find a solution in a single iteration of the local reduction method will be equivalent to the solution of the minimisation problem and the maximal time needed to solve the maximisation problems. 

\begin{figure*}[tbp]
\centering
\includegraphics[width=0.9\textwidth]{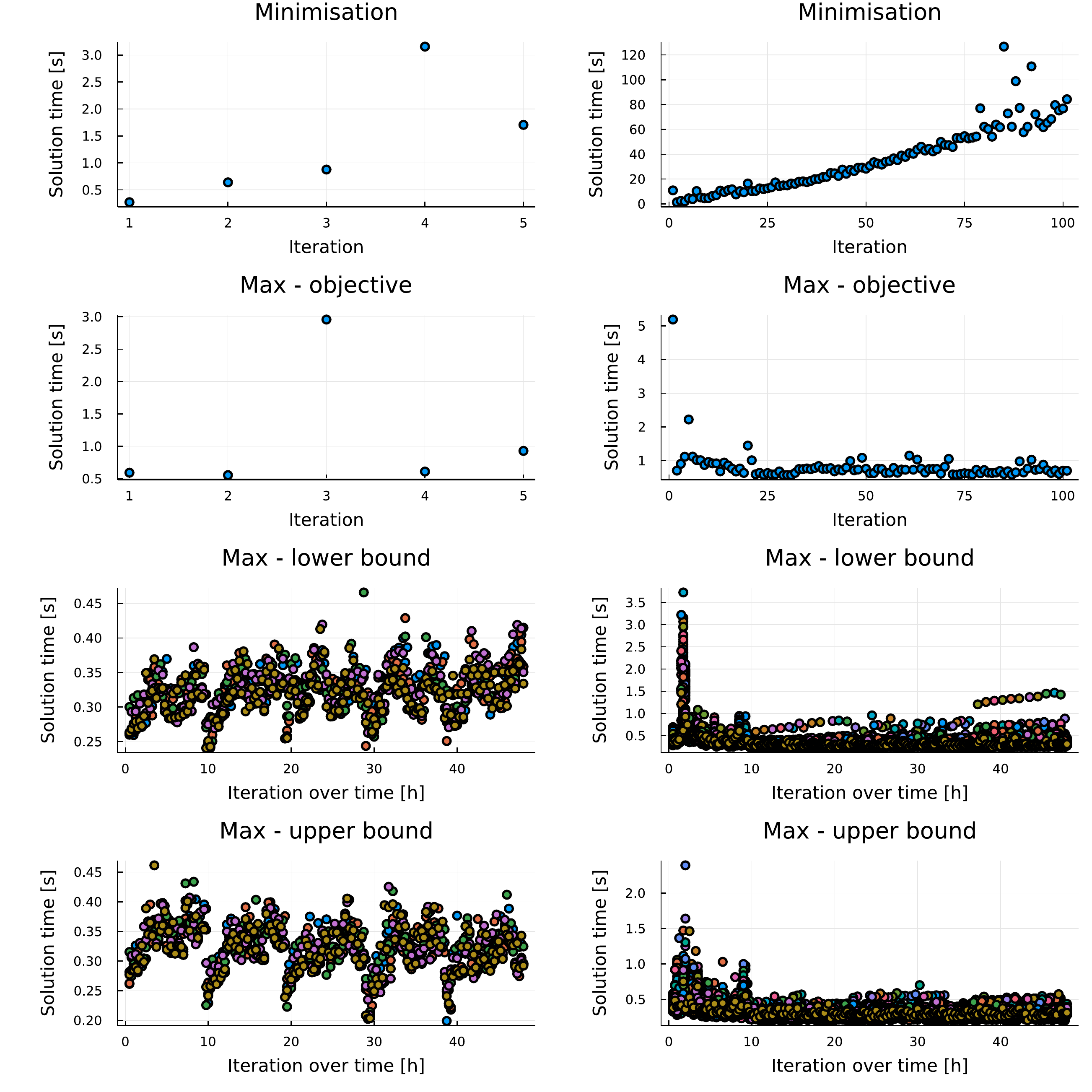}
\caption{Impact of significant parametric uncertainty on local reduction for Case A (left) and Case B (right). The first row shows the time necessary for solving the minimization problem (line 3 in Algorithm \ref{alg:LocalReductionInexact}) as a function of iteration of local reduction, corresponding to the number of scenarios in the current set $\mathbb{H}$. The second row shows the time necessary to solve the first maximization problem related to the objective as a function of iteration of local reduction (line 1 in Algorithm \ref{alg:LocalReductionMaximisation}). The two bottom rows show the time necessary to solve the maximization problems corresponding to $n_g=2$ trajectory constraints \eqref{eq:TempBounds} as a function of samples over the horizon of 48 h, $N=192$, (lines 4-8 in Algorithm \ref{alg:LocalReductionMaximisation})}
\label{fig:TimingBuildings}
\end{figure*}

\subsubsection{Results - Case B}
\label{sec:CaseB}
In case B, we ran Algorithm \ref{alg:LocalReductionGeneral} with the time-varying uncertainties from Table \ref{tbl:Uncertainties} and parametric uncertainties $\delta_{i,j},\eta_j\in[0.96,1.03]$. The range for parameters $\delta_{i,j},\eta_j$ was chosen to increase the parametric uncertainty in the dynamics while still ensuring that a controller of the form~\eqref{eq:ControlBuilding} exists.

\paragraph{Overall performance}
In contrast to Case A, which has given only five scenarios, the local reduction method in Case B found 101 scenarios. A validation for 500 scenarios is shown in the top right plot in Fig. \ref{fig:Comparison}. The plot shows that the controller obtained for 101 scenarios from the local reduction avoided violating constraints. This result indicates that local reduction can handle parametric uncertainty.

\paragraph{Comparison with other approaches}
The controller in Case B has also been compared with the same set of controllers as in Case A, obtained for the new range of uncertainty. As expected, the nominal controller and the random controller were unable to satisfy the constraints (middle plots in the right column in Fig. \ref{fig:Comparison}). In contrast to Case A, the controller based on nominal and two extreme scenarios was also unable to satisfy the constraints, as shown in the bottom right plot in Fig. \ref{fig:Comparison}. The black lines after 24 hours cross the green lines so that the lower bound on the temperature is violated (0.5$^\circ$C). Therefore, taking extreme scenarios may be insufficient, as shown in Section \ref{prop:NotBoundary}. 

The performance of local reduction in handling parametric uncertainty will be also confirmed in the nonlinear case study in Section \ref{sec:Compressors}.

\paragraph{Computational time performance}
Figure \ref{fig:TimingBuildings} shows in the right column the time performance of the elements of the local reduction method in terms of time needed to solve them. As expected, the time to obtain the solution to the minimisation problems increases with iterations. The increase is due to the fact that the number of scenarios considered in every iteration is greater than in the previous one. At the same time, the time necessary to solve a single maximisation problem remained similar across the iterations.  This result indicates the potential for parallelisation to improve performance. 

In this work, we also assume that the structure of the dynamic feedback policy in \eqref{eq:InitialDynamics} is known. The improved computational time of the local reduction can be used to validate whether the chosen control parametrisation  is suitable for robustness, because it enables obtaining a solution more quickly. Thus, if the results for a given parametrisation are unsatisfactory, a different parametrisation can be evaluated.

\subsubsection{Results - properties of interim scenarios}

Finally, we show the impact of the choice of when two scenarios are considered similar in Algorithm \ref{alg:LocalReductionInexact}. The results of varying $\epsilon$ are collected in Table \ref{tbl:similarityCheck}. The time was obtained using BenchmarkTools.jl \cite{Robust_Chen2016}. 

As expected, a high threshold for similarity of scenarios leads to fewer scenarios added to the problem. This is visible in particular in Case B with more significant parametric uncertainty, where the high threshold $\epsilon=0.1$ led to two scenarios, whereas a lower threshold $\epsilon=0.001$ led to 101 scenarios. The middle column in Table \ref{tbl:similarityCheck} shows that robustness to the three scenarios in Case B for $\epsilon=0.1$ is insufficient to robustify the system against random realisations of uncertainties. Conversely, both $\epsilon=0.01$ and $\epsilon=0.001$ seem to robustify the system against the random realisations. Unless explicitly stated, the paper considered $\epsilon=0.001$.

The number of scenarios also affects the time necessary to solve the resulting optimisation problem corresponding to all the scenarios (right column of Table \ref{tbl:similarityCheck}). 

\begin{table}[]
\caption{Influence of the tolerance for checking the similarity of scenarios on the resulting number of scenarios, maximal constraint violation over 500 random scenarios, and the time to obtain a solution for the scenarios obtained}
\label{tbl:similarityCheck}
\centering
\begin{tabular}{lllll}
\toprule
                        & $\epsilon$ & \# scenarios & Max violation & Time     \\ \midrule
\multirow{3}{*}{Case A} & 0.1       & 3            & 0             & 6.4 min  \\
                        & 0.01       & 5            & 0             & 10.8 min \\
                        & 0.001       & 5            & 0             & 10.5 min \\\midrule
\multirow{3}{*}{Case B} & 0.1      & 2            & 1.5 $^\circ$C          &   5.3 min       \\
                        & 0.01       & 101          & 0             & 5 h       \\
                        & 0.001       & 101          & 0             & 5 h      \\ \bottomrule
\end{tabular}
\end{table}

\subsection{Nonlinear system with a dynamic controller}
\label{sec:Compressors}
\subsubsection{Dynamics}
A further case study is presented to show how our proposed method can be used in nonlinear systems. We want to design a flow controller for a centrifugal compressor. The dynamics for a compressor are nonlinear\cite{Experimental_Cortinovis2015}:
\begin{equation}
\begin{aligned}
\dot{p}_{s}&={}\frac{a_{01}^2}{V_{s}}(m_{\text{in}}-m+m_{r}),\\
\dot{p}_{d}&={}\frac{a_{01}^2}{V_{d}}(m-m_{\text{out}}-m_{r}),\\
\dot{m} &={} \frac{A_{1}}{L_{c}}(\Pi(m,\omega) p_{s}-p_{d}),\\
\dot{\omega} &={} \frac{1}{J}(\tau-\tau_{c}),\\
\dot{m}_{r} &={} \frac{1}{\tau_{r}}(m_{\text{SP}}-m_{r}),
\end{aligned}
\label{eq:CompressorSystemCortinovisMultipleRecycle}
\end{equation}
where $p_{s}$ and $p_d$ are the suction and discharge suction pressures, $a_{01}$, $V_{s}$, $A_{1}$, $L_{c}$, $J$ are constant parameters defining the geometry of the compressor, the piping, and the shaft, $m_{\text{SP}}$ is the controller for the recycle valve, $m_{r}$ is the mass flow through the recycle valve, $m$ is the mass flow through the compressor, $\omega$ is the speed of the shaft of the compressor in rad\,s$^{-1}$, $\tau$ is torque provided by a flow controller, $\tau_{c}$ is the reaction torque of the compressor. The function $\Pi(\cdot,\cdot)$ gives the pressure ratio across a compressor as a function of compressor mass flow and speed:
\begin{equation}
\Pi(m,\omega) =   \alpha_0+\alpha_1m+\alpha_2\omega+\alpha_3m\omega+ \alpha_4{m}^2+\alpha_5\omega^2.
\label{eq:Map}
\end{equation}
The coefficients $\alpha_i$, $i=0,\ldots,5$ are usually estimated from operating data. Here we assume $\alpha_0=2.691$, $\alpha_1=-0.014$, $\alpha_2=-0.041$, $\alpha_3=0.0009$, $\alpha_4=0.0002$, $\alpha_5=0.00002$. The uncertainty in $\alpha_i$ is described in Section \ref{sec:CompressorUncertainties}.

The value of $m_{\text{in}}$ and $m_{\text{out}}$ captures the external mass flows on the suction and discharge side, respectively. The mass flows depend on the pressures $p_s$ and $p_d$, and external pressures $p_\mathrm{in}$ and $p_\mathrm{out}$:
\begin{subequations} \label{eqn:mass-flows}
\begin{align}
    {m}_{\mathrm{in}}&= 0.4k_\mathrm{in}A_\mathrm{in}\sqrt{p_\mathrm{in}-p_s},\\
    {m}_{\mathrm{out}} &= 0.8k_\mathrm{out}A_\mathrm{out}\sqrt{p_d-p_\mathrm{out}},\\
   {m}_{\mathrm{SP}} &= k_\mathrm{rec}u_{\text{rec}}A_\mathrm{rec}\sqrt{p_d-p_s},
\end{align}
\end{subequations}
where \(A_\mathrm{in}, A_\mathrm{out}, A_\mathrm{rec}\) represent the inlet, outlet and recycle valve orifice areas and \(k_\mathrm{in}, k_\mathrm{out}, k_\mathrm{rec}\) the respective valve gains.  The values of constant parameters were taken from~\cite{Real_Milosavljevic2020}. The value of $u_{\text{rec}}$ is obtained from an auxiliary PI controller and can take values between $0$ and $1$, ensured by a smoothed saturation function of the form \eqref{eq:SmoothSaturation} with $\beta_0=0.072$, $\beta_1=0.071$, $\beta_2=5.279$, $\beta_3=-0.001$.

\subsubsection{Optimal control}
The objective is to reach the desired flow level $m_d=100$ kg s$^{-1}$ without violating speed and flow constraints, imposed due to safety. The objective function was formulated as:
\begin{equation}
\label{eq:CompressorObjective}
J(\tau) = \int\limits_0^{t_f} 100m_{r}^2(s)+ 0.1\omega^2(s) +1000(m(s)-m_d)^2\d s
\end{equation}
where $t_f=100$ s.

The constraints on the mass flow and the speed are:
\begin{subequations} \label{eqn:CompressorConstraints}
\begin{align}
m\in&{} [65,105]\text{ kg s}^{-1}    \label{eqn:flow}\\
\omega\in&{}[550,876] \text{ rad s}^{-1} \label{eq:speed}
\end{align}
\end{subequations}

The control input $\tau$ is a PI controller parametrised by $K_p$ and $K_i$:
\begin{equation}
\tau(t) = K_p(m(t)-m_d)+K_i\int\limits_0^t (m(s)-m_d) \d s
\label{eq:CompressorControl}
\end{equation}
The parametrisation from \eqref{eq:CompressorControl} is typical for centrifugal compressors \cite{Real_Milosavljevic2020}. The torque that can be applied to the compressor must be between zero and 1000 Nm. The bounds on the torque were ensured by a smoothed saturation function of the form \eqref{eq:SmoothSaturation} with $\beta_0=73.324$, $\beta_1=0.072$, $\beta_2=0.005$, $\beta_3=0$.

\subsubsection{Uncertainties}
\label{sec:CompressorUncertainties}
The uncertainties we considered in this case study are in the valve gains $k_{\text{in}}$, $k_{\text{out}}$, $k_{\text{rec}}$, and correspond to $\pm5$\%, and in the parameters $\alpha_i$ in the polynomial compressor map \eqref{eq:Map}, and correspond to $\pm2$\%. Thus, there are nine uncertain parameters. 

\subsubsection{Results}
To find the flow controller from \eqref{eq:CompressorControl}, the dynamics were discretised using the trapezoidal collocation method with time step $0.5$\,s.

\begin{figure*}[tbp]
\centering
\includegraphics[width=0.9\textwidth]{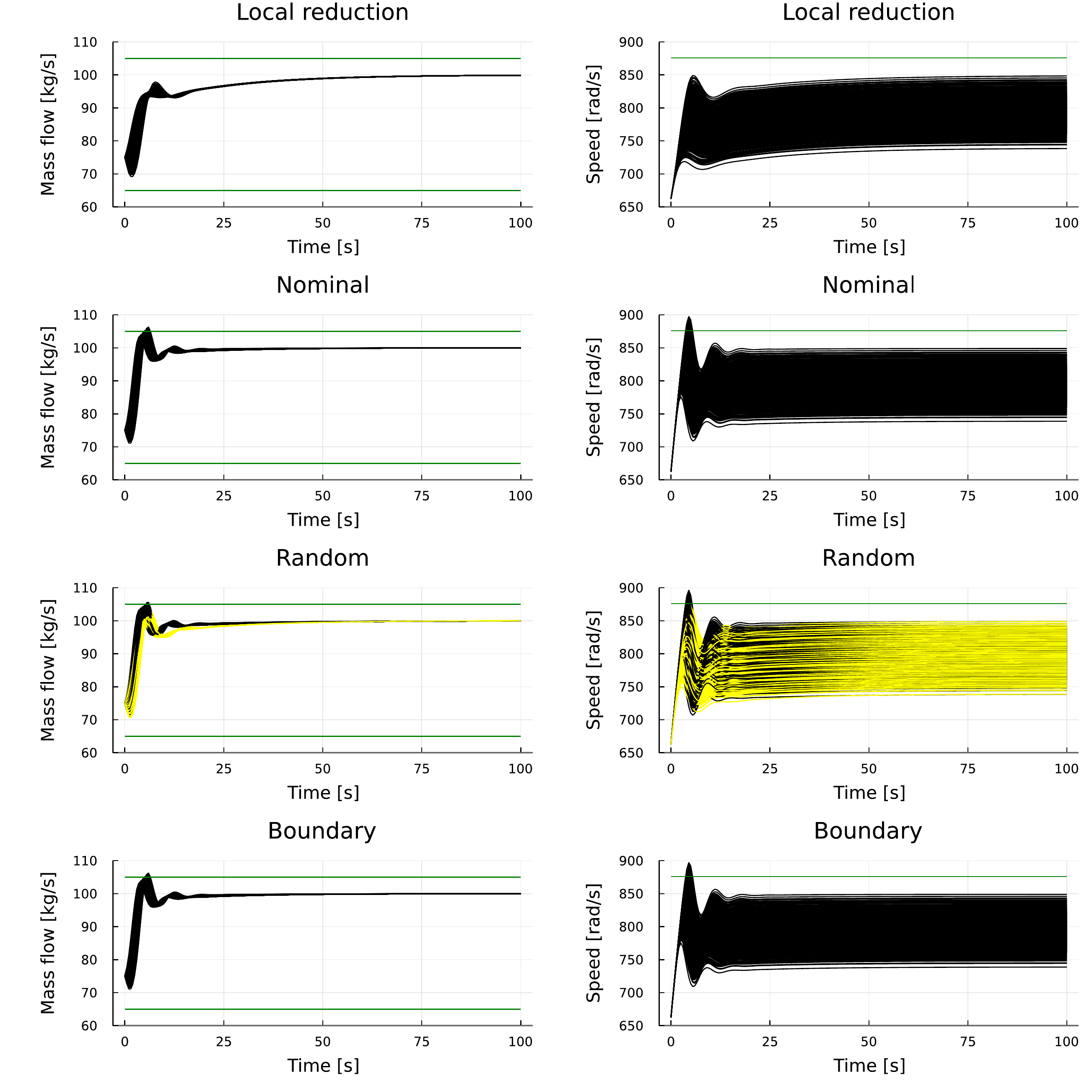}
\caption{Compressor case study - comparison of four approaches}
\label{fig:ComparisonCompressors}
\end{figure*}

\paragraph{Overall performance}
The local reduction method applied to the compressor case study resulted in two scenarios. Figure \ref{fig:ComparisonCompressors} shows the results of the validation for the two scenarios obtained from the local reduction method with $\epsilon=10^{-6}$. The controller obtained from the local reduction did not violate the constraints on either the mass flow (top left) or the speed of the compressor (top right).

\paragraph{Comparison with other approaches}
If we were to consider all extreme realisations of the uncertainties, we would obtain $2^9=512$ scenarios. The local reduction method we propose in this paper reduced the number of scenarios to two. Conversely, the nominal controller was not able to satisfy the constraints and both the mass flow and the speed violated their upper limits (second row of plots in Fig. \ref{fig:ComparisonCompressors}). The controllers based on randomly chosen scenarios (from a uniform distribution) are shown in the third row, with black indicating a controller based on two random scenarios. The controller based on two random scenarios was insufficient to ensure constraint satisfaction and both the mass flow and the speed of the compressor violated their upper limits. The minimal number of random scenarios needed to ensure constraint satisfaction was nine (yellow). Finally, the bottom row in Fig. \ref{fig:ComparisonCompressors} shows the performance of the nominal+extreme controller obtained for three scenarios (one scenario on the lower bound, one scenario on the upper bound, and one scenario with no uncertainty). The controller based on the nominal and two extreme scenarios was insufficient to satisfy constraints. Thus, the comparison with other scenario-based approaches confirms the potential of local reduction for solving robust optimal control problems with parametric uncertainty.

\section{Conclusions}
\label{sec:Conclusions}

Solving robust nonlinear optimal control problems is challenging, especially if the knowledge about the uncertainty is limited. Scenario-based approaches provide a way of reformulating the optimal control problems as nonlinear optimization problems. The choice of scenarios and their number affects the robustness of the solution as well as computational complexity of the resulting optimisation problems. In this work, we formulated robust optimal control problems with time-varying and parametric uncertainty as semi-infinite optimisation problems to facilitate the choice of scenarios. The new formulation enabled usage of semi-infinite optimisation algorithms, such as local reduction methods. By adding interim worst-case scenarios, the local reduction method enables finding a trade-off between the size of the resulting optimization problem and robustness of the solution to the original optimal control problem. We overcome the dependence on global solvers in the original local reduction formulation by proposing inexact local reduction and providing theoretical bounds on possible constraint violation. The new method consists in solving multiple optimal control problems of reduced size compared to the full scenario-based optimisation. In particular, the small control problems can be solved in parallel, further improving the computational speed.

The performance of our approach was evaluated in two case studies with both additive and parametric uncertainty: thermal comfort control in a residential building and mass flow control in a centrifugal compressor. A comparison with common approaches based on a random choice of scenarios and on extreme scenarios indicates that local reduction allows solving robust optimal control problems in an efficient way while ensuring robustness. In particular, the case studies confirm that the proposed inexact local reduction method allows finding worst-case scenarios in the interior of the uncertainty sets. As a result, the new method was able to handle larger parametric uncertainty than other scenario-based approaches.

In this work we required that the constraints must be satisfied for all realisations of the uncertainty. In the future, it would be advisable to look at the conservatism of the obtained solutions and possible relaxations of this requirement. In particular, tighter bounds on constraint violation can be derived if a distribution of the uncertainty is available. Future work could include numerical improvements of approximate local reduction, including warm-starting and use of custom nonlinear optimization solvers, as well as explicit parallelisation of the optimal control problems.

% \subsection{Acknowledgements} An Acknowledgements section is started with \verb"\ack" or
% \verb"\acks" for \textit{Acknowledgement} or
% \textit{Acknowledgements}, respectively. It must be placed just
% before the References.

\bibliography{bibTAC}%
\end{document}